\documentclass[11pt,leqno]{amsart}

\usepackage[all,cmtip]{xy}
\usepackage{a4,latexsym}
\usepackage[utf8]{inputenc}
\usepackage{amsfonts}
\usepackage[hidelinks]{hyperref}
\usepackage{amsxtra,amsmath,amsfonts,amscd, amssymb, mathrsfs, amsthm}
\usepackage{color, mathtools}
\usepackage{bbm}
\usepackage{enumitem}
\usepackage{xcolor}
\usepackage{graphicx}
\usepackage{tikz-cd}
\usepackage{braket}
\usepackage[all]{xypic}
\usepackage[toc,page,title,titletoc,header]{appendix}
\usepackage[capitalize]{cleveref}

\addtolength{\oddsidemargin}{-1cm}
\addtolength{\evensidemargin}{-1cm}
\addtolength{\textwidth}{2cm}
\addtolength{\topmargin}{-2cm}
\addtolength{\textheight}{2cm}

\numberwithin{paragraph}{section}

\setlist[enumerate]{label=\it{(\roman*)},
	ref=\it{(\roman*)}}


\newcommand{\C}{{\mathbb C}}
\newcommand{\R}{{\mathbb R}}







\newtheorem{theorem}{Theorem}[section]

\newtheorem{corollary}[theorem]{Corollary}

\newtheorem{lemma}[theorem]{Lemma}
\newtheorem{lemma*}{Lemma}

\theoremstyle{definition}

\newtheorem{proposition&definition}[theorem]{Proposition\&Definition}
\newtheorem{lemma&definition}[theorem]{Lemma\&Definition}
\newtheorem{theorem&definition}[theorem]{Theorem\&Definition}

\newtheorem{example*}{Example}
\newtheorem{remark}{Remark}

\newtheorem{question*}{Question}

\newtheorem*{theorem2}{Theorem}

\def\vn{{\vec n}}
\def\p{{\partial}}
\def\S{\mathbf S}

\def\U{\mathcal U}
\def\W{\mathcal W}

\numberwithin{equation}{section}

\begin{document}
	
	\title[ $L^p$-boundedness  of multi-parameter Fourier integral operators]{ $L^p$-boundedness  of multi-parameter Fourier integral operators}
	
	\author[J.~Cheng]{Jinhua Cheng}
	\address{J. Cheng,  Department of Mathematics, Westlake University, 310024 Hangzhou, P. R. China }
	\email{chengjinhua@westlake.edu.cn}
	

	\begin{abstract}
	We study a specific class of Fourier integral operators characterized by symbols belonging to the multi-parameter Hörmander class $\mathbf{S}^m(\R^{ n_1} \times \R^{ n_2}  \times \cdots \times \R^{n_d}  )$, where $n= n_1 + n_2 +\cdots + n_d$. Our investigation focuses on cases where the phase function $\Phi(x,\xi)$ can be decomposed into a sum of individual components $\Phi_i(x_i,\xi_i)$, with each component satisfying a non-degeneracy condition.  We extend the Seeger-Sogge-Stein theorem under the condition that the dimension $ n_i \ge 2$ for each
	$1\le i \le d$. As a corollary, we obtain the boundedness of multi-parameter Fourier integral operators  on local Hardy spaces, Lipschitz spaces, and Sobolev spaces.
	\end{abstract}
	
	\keywords{Multi-parameter Fourier integral operators, Hardy space, Cone decomposition, Littlewood-Paley decomposition} 
	\subjclass{{Primary 14G40; Secondary 11G10, 14G22}}
	
	\maketitle
	
	\setcounter{tocdepth}{1}
	

\section{Introduction}
\setcounter{equation}{0}
This paper investigates the mapping properties of Fourier integral operators.   As  our considerations will be local, we will work on $\R^n$ for some $n \ge 2$. 
Let $f$  be a Schwartz function,  a  Fourier integral operator, as defined by Hörmander \cite{H71}, is of the form
\begin{equation}\label{linear FIO}
 Tf(x) = \int_{\R^n}  e^{2\pi  i  \Phi(x ,\xi)  }  \sigma(x, \xi) \widehat{f}(\xi)  d \xi,
\end{equation}
 where $\widehat{f}(\xi) $ is  the Fourier transform $ \widehat{f}(\xi ) = \int_{\R^n} e ^{- 2 \pi \mathbf{i} x \cdot \xi } f(x) dx$, of $f$. 
  One  requires the symbol 
$\sigma(x, \xi)\in \mathcal{C}^\infty(\mathbb{R}^n\times \R^n )$ has   compact support in $x$.  A symbol $\sigma$ is said to  belong to the H\"ormander class $\mathbf{S}^m(\mathbb{R}^n)$ if it 
 satisfies the estimates
\begin{equation} \label{Hormander class}
\left| \partial_\xi^\alpha \partial_{x }^\beta \sigma(x,   \xi) \right  |\le C_{\alpha,\beta}(1+|\xi|)^{m-|\alpha|}
\end{equation}
for all  multi-indices $\alpha,\beta$. The phase function $\Phi(x,\xi)\in  \mathcal{C}^{\infty}( \mathbb{R}^n \times (\mathbb{R}^n\setminus\{0\})   )$ is a real-valued, positively homogeneous of degree one in the variable $\xi$, 
what's more, $\Phi$ obeys  the \emph{non-degenercy} condition
\begin{equation}\label{nondegeneracy}
\det\left(  \frac{\partial^2 \Phi(x,\xi)}{\partial x_i\partial \xi_j} \right) \neq 0,~~~~~\xi\in \mathbb{R}^n\setminus\{0\}.
\end{equation}

Fourier integral operators defined as (\ref{linear FIO})  -  (\ref{nondegeneracy})  have been studied extensively and intensively  over the past decades due to their wide applicability in partial differential equations.
 Eskin \cite{E70} and H\"ormander \cite{H71} showed that  $T$  of order $0$ is bounded from $L^2$ to itself,
however,  for $p\neq 2$, Fourier integral operators may not  be bounded on $L^p$. The  $L^p-$estimate for Fourier integral operators was first investigated by Duistermaat and H\"ormander \cite{DH72}, Colin de Verdi\'ere and Frisch \cite{CF76}, Brenner \cite{B77}, 
 then the sharpness of the order $-(n-1)|1/2-1/p|$ was shown by  Peral \cite{P80} and  Miyachi \cite{M80}, Beals \cite{B82}, and eventually the optimal result was obtained by Seeger, Sogge and Stein \cite{SSS91} in 1991, where they showed the operator $T$ of order $m$ is bounded on $L^p(\mathbb{R}^n)$ if $m\le -(n-1)|1/p-1/2|$~ for $~1<p<\infty$.  Due to the atomic representation of the  Hardy space $H^1(\mathbb{R}^n)$ and the complex interpolation theorem obtained  by Fefferman and Stein, see \cite{F71} and \cite{FS72},  they essentially proved that $T$ of order $-(n-1)/2$ maps the Hardy space $H^1(\mathbb{R}^n)$ into $L^1(\mathbb{R}^n)$.

\begin{remark}
One can not expect $T$ of order $-(n-1)/2$ to be bounded on $L^1(\R^n)$,  however, $T$ is of weak-type (1,1),  for more details, see Tao \cite{T04}.
\end{remark}

 On the other hand,  the multi-parameter theory, sometimes called product theory corresponds to range of questions which are concerned with issues of harmonic analysis that are invariant with respect to a family of dilations $\delta: x\rightarrow \delta x=(\delta_1 x_1, \dots, \delta_d x_d), ~\delta_i >0, i=1,\dots,d$.  The multi- parameter function spaces and boundedness of Fourier multipliers, singular integral operators on such spaces have been extensively studied by many authors, for instance, M\"uller, Ricci and Stein \cite{MRS95, MRS96}, R. Fefferman \cite{F85, F86, F87},  R. Fefferman and Pipher \cite{FP05, FP97} , Muscalu, Pipher,  Tao and Thiele  \cite{MPTT04, MPTT06}.

Wang \cite{W22}  considered the product H\"ormander class $\mathbf{S}^m(\R\times \R \times \cdots \times \R )$, a symbol  $\sigma$   $\in \mathbf{S}^m(\R\times \R \times \cdots \times \R )$ if 
\begin{equation}\label{product class}
\left|\p_\xi^\alpha\p_{x}^\beta \sigma(x, \xi)\right|~\leq~C_{\alpha, \beta}\left(1+|\xi|\right)^m \prod_{i=1}^n \left({1\over 1+|\xi_i|}\right)^{|\alpha_i|}
\end{equation}
for every multi-indices $\alpha,\beta$.  Wang proved the following Theorem
\begin{theorem2}[Wang, 2022]\label{Wang}
Let $T$ be  defined as (\ref{linear FIO}),   (\ref{nondegeneracy}) and (\ref{product class}). Suppose
$\sigma \in  \mathbf{S}^m(\R\times \R \times \cdots \times \R )$ for $-(n-1)/2<m\le 0$. We have
\begin{equation*}
\left\| Tf \right \|_{L^p(\R^n)} \le C  \left\| f \right \|_{L^p(\R^n)}, \qquad 1<p<\infty
\end{equation*}
whenever
 \begin{equation*}
m \le -(n-1) \left| \frac{1}{2} - \frac{1}{ p} \right|.
\end{equation*}
\end{theorem2}

Let  $\xi=(\xi_1, \xi_2, \dots, \xi_d)\in \R^\vn$ and  $x=(x_1, x_2, \dots, x_d)\in \R^\vn$, where
\[
\R^\vn= \R^{n_1}\times \R^{n_2} \times \cdots \times \R^{n_d}, \quad n=n_1+n_2+\cdots+n_d, \quad d\ge 2.
\]
We say that $\sigma$ belongs to the product H\"ormander class  $\mathbf{S}^m_{\rho, \delta}(\R^{\vn} )  $,  if for all multi-indices
$\alpha,\beta$, it satisfies
\begin{equation}\label{product symbol}
\left|\p_\xi^\alpha\p_{x}^\beta \sigma(x,\xi)\right| \le C_{\alpha, \beta}\left(1+|\xi|\right)^m \prod_{i=1}^d (1+|\xi_i|)^{ -\rho |\alpha_i|+\delta |\beta_i|}.
\end{equation}
We denote the standard product H\"ormander class by  $\mathbf{S}^m(\R^\vn)=\mathbf{S}^m_{1, 0}(\R^\vn ) $.
Moreover,  we restrict the phase function to be the particular form, that is,
\begin{equation}\label{phase function}
\Phi(x,\xi)=\sum_{i=1}^d \Phi_i(x_i,\xi_i).
\end{equation}

Our main result is stated as below
\begin{theorem}\label{classical Hardy space}
Let the Fourier integral operator $T$ be defined as (\ref{linear FIO}) and (\ref{phase function}).
 Each $\Phi_i(x_i,\xi_i)\in \mathcal{C}^{\infty}( \mathbb{R}^{n_i} \times (\mathbb{R}^{n_i}\setminus\{0\})   )$ is a real-valued, positively homogeneous of degree one in the variable $\xi_i$, and satisfies the \emph{non-degenercy} condition (\ref{nondegeneracy}).
  Suppose    $\sigma \in \mathbf{S}^m(\R^\vn)$ and  $n_i \ge 2$ for $i=1,2,\dots,d$,   then we have 
\begin{equation}\label{aim}
 \left\| T  f \right \|_{L^p(\mathbb{R}^n)} \le  C  \left\| f \right \|_{L^p(\mathbb{R}^n)} , \quad  1<p < \infty
\end{equation}
whenever
\[
m\le - (n-d) \left |\frac{1}{2}-\frac{1}{p} \right|.
\]
\end{theorem}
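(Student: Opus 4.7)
The plan is to reduce Theorem \ref{classical Hardy space} to the endpoint bound $T : H^1_{\mathrm{prod}}(\R^\vn) \to L^1(\R^n)$ at $m = -(n-d)/2$, where $H^1_{\mathrm{prod}}$ is the Chang--Fefferman product Hardy space on $\R^{n_1}\times\cdots\times\R^{n_d}$. Complex interpolation of this endpoint against an $L^2\to L^2$ bound at $m=0$ yields the claimed estimate for $1<p\le 2$, and the range $2\le p<\infty$ follows by duality, the adjoint of $T$ being again an FIO of the same separable-phase type. The $L^2$ bound at $m=0$ comes from Plancherel combined with Littlewood--Paley almost-orthogonality across the frequency factors, using only the product symbol estimates on each annulus $\{|\xi_i|\sim 2^{k_i}\}$.

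For the Hardy endpoint I would perform a $d$-parameter Littlewood--Paley decomposition $\sigma=\sum_{\vec k\in\N^d}\sigma_{\vec k}$, with $\sigma_{\vec k}$ localised in $\prod_i\{|\xi_i|\sim 2^{k_i}\}$, followed by a second dyadic cone decomposition in every coordinate: for each $i$ choose a maximal $2^{-k_i/2}$-separated set $\{\xi_i^{k_i,\nu_i}\}$ on $S^{n_i-1}$ and a subordinate partition of unity $\{\chi_i^{k_i,\nu_i}\}$. The separability $\Phi=\sum_i\Phi_i$ is crucial here: each $\Phi_i$ can be linearised independently at $\xi_i^{k_i,\nu_i}$, so that the oscillatory factor becomes $\prod_i e^{2\pi i \langle\nabla_{\xi_i}\Phi_i(x_i,\xi_i^{k_i,\nu_i}),\,\xi_i\rangle}$ times a bounded residual, the latter controlled by the homogeneity of $\Phi_i$ and the aperture $2^{-k_i/2}$.

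The resulting pieces $T_{\vec k}^{\vec\nu}$ then have kernels essentially given by tensor products of $d$ Seeger--Sogge--Stein ``plates,'' the $i$-th plate having length $2^{-k_i}$ along $\xi_i^{k_i,\nu_i}$ and thickness $2^{-k_i/2}$ in the orthogonal $n_i-1$ directions. For a product atom $a$ supported on a rectangle $R=R_1\times\cdots\times R_d$ --- to which general $H^1_{\mathrm{prod}}$-atoms reduce by Journ\'e's covering lemma --- split the sum over $\vec k$ into $2^d$ regimes according to whether $2^{-k_i/2}$ exceeds or falls below $\mathrm{side}(R_i)$ in each coordinate. In each ``small-plate'' direction estimate by Cauchy--Schwarz against the $L^2$ norm of $a$, exploiting almost-orthogonality of the plates in $\nu_i$ (Cotlar--Stein); in each ``large-plate'' direction use the vanishing moments of $a$ in the $i$-th variable via integration by parts in $\xi_i$. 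The plate-count loss $\prod_i 2^{k_i(n_i-1)/2}$ is exactly what the choice $m=-\sum_i(n_i-1)/2=-(n-d)/2$ absorbs, the remaining gains in each regime making the $\vec k$-sum convergent.

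The main obstacle is that $\sigma\in\mathbf{S}^m(\R^\vn)$ need \emph{not} split as a tensor product $\prod_i\sigma_i(x_i,\xi_i)$, so $T$ is not literally the tensor product of single-parameter FIOs and one cannot simply iterate the Seeger--Sogge--Stein theorem factor by factor. The non-product $x$-dependence has to be controlled by genuine multi-parameter almost-orthogonality after the cone decomposition and linearisation, tracking uniform symbol estimates across the $\vec\nu$-sum. A companion difficulty lies on the Hardy side: $H^1_{\mathrm{prod}}$-atoms are supported on arbitrary open sets rather than rectangles, so the factor-by-factor plate analysis must be combined with a Journ\'e-type covering argument before the per-factor vanishing moments can be exploited.
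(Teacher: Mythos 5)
Your proposal takes a genuinely different route from the paper's proof. The paper works entirely in the \emph{classical} Hardy space $H^1(\R^n)$: it reduces the theorem to showing that $T$ and $T^*$ of order $-(n-d)/2$ map $H^1(\R^n)\to L^1(\R^n)$, checks this on ordinary atoms supported on balls $B_r(\overline y)$, and then interpolates with the $L^2$ bound and dualizes. The multi-parameter structure enters only through the cone decomposition $\phi_{j\ell}$ (with $|\xi_i|\sim 2^{j-\ell_i}$), a Seeger--Sogge--Stein angular decomposition in each factor $\R^{n_i}$, and a two-case argument comparing $\ell_M=\max_i\ell_i$ with the atom scale $k$; in the regime $\ell_M\le\tfrac43 k$ the region of influence is deliberately enlarged to $Q_\ell$ (lowering the threshold from $j_i\ge k$ to $j_i\ge k-\tfrac34\ell_i$) so that both the outside-$Q_\ell$ kernel bound and the inside-$Q_\ell$ Cauchy--Schwarz bound still gain geometrically in $\ell_i$, the latter using $n_i\tfrac{n-d}{2n}\ge\tfrac12>\tfrac38$ which holds once $n_i\ge 2$. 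You propose instead to prove the product-Hardy endpoint $T\colon H^1_{\mathrm{prod}}(\R^\vn)\to L^1$.

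There is a genuine gap in that step. You assert that general $H^1_{\mathrm{prod}}$-atoms ``reduce by Journ\'e's covering lemma'' to rectangle atoms, but this is exactly what Carleson's counterexample rules out: boundedness on rectangle atoms alone does \emph{not} yield $H^1_{\mathrm{prod}}\to L^1$ boundedness. R.~Fefferman's criterion requires the quantitatively stronger estimate $\int_{(\gamma R)^c}|Ta|\lesssim\gamma^{-\delta}$ for rectangle atoms $a$ on $R$, uniformly in $\gamma\ge 2$, which is then fed into Journ\'e's lemma to handle atoms on arbitrary open sets; your sketch neither states nor proves this. Producing that extra decay is not a formality here, because the symbol class $\mathbf S^m(\R^\vn)$ decays as $(1+|\xi|)^m$ in the \emph{total} frequency rather than as a product $\prod_i(1+|\xi_i|)^{m_i}$: the localized pieces $\sigma_{\vec k}$ gain only in $\max_i k_i$, so the ``small-plate vs.\ large-plate'' dichotomy does not have independent per-factor gains to sum over $\vec k$. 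The paper's choice of the classical $H^1$ sidesteps all of this: a ball atom has a single scale $k$, the required cancellation is one-parameter, and the geometric gain in $\ell$ comes from the plate count $\prod_i 2^{-\ell_i(n_i-1)/2}$ (valid precisely because $n_i\ge 2$), not from any product-Hardy cancellation. It is also not clear a priori that the endpoint $H^1_{\mathrm{prod}}\to L^1$ is even true for these operators, so beyond the stated gap the overall strategy rests on an unverified claim.
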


\begin{remark}
 When $d=1$, Theorem \ref{classical Hardy space} reduces to the classical result of Seeger, Sogge, and Stein. For more details on Fourier integral operators, one can refer to the monographs by Stein \cite{S93} and Sogge \cite{Sogge93}.
\end{remark}

\begin{remark}
Note that   $(n-1)$  is replaced by $(n-d)$. This adjustment is due to the assumption that each $\Phi_i$ satisfies the non-degeneracy condition.  For instance, set $\Phi_i (x_i, \xi_i)= x_i \cdot \xi_i + |\xi_i| $ for each $1 \le i \le d$, then the rank of the Hessian matrix $(\Phi_i)_{ \xi_i\xi_i}$ is $n_i -1 $. Consequently  the total  rank of the Hessian matrix is $\sum_{i=1}^d (n_i-1) = n-d$. Therefore, according to  the classical results of Seeger, Sogge and Stein in  \cite{SSS91}, it is expected that $n-1$ should be replaced by  $n-d$.
\end{remark}

\begin{remark}
Throughout the paper, the constant $C$ may be dependent on the $\sigma,\Phi, \alpha, \beta$, $d, p, n$, but not dependent on $f$ or other specific function and  may be different line by line.
\end{remark}

\begin{corollary}\label{Lip}
 Let  $T$ be the Fourier integral operator in Theorem \ref{classical Hardy space}. For each 
 $i=1,2, \dots, d$,  with $n_i \ge 2$,   we have
 \begin{equation}
\left \| T f  \right\|_{Lip(\alpha)} \le C  \left\| f \right\|_{Lip(\alpha)}, \quad \sigma \in \S^{- \frac{n-d}{2}}(\R^\vn ),
\end{equation}
Here,  $Lip(\alpha)$  denotes the Lipschitz space of order  $\alpha > 0$.
\end{corollary}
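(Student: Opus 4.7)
\emph{Proof plan.} The plan is to deduce the Lipschitz estimate from a uniform $L^\infty$ bound on frequency-localized pieces of $T$, using a product Littlewood--Paley characterization of $Lip(\alpha)$ combined with the factorization $\Phi=\sum_i\Phi_i$.

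\emph{Step 1 (Littlewood--Paley characterization).} Fix a product Littlewood--Paley resolution $\mathrm{Id}=\sum_{\vec k\in\N^d}\Delta_{\vec k}$ with $\Delta_{\vec k}=\Delta_{k_1}^{(1)}\cdots\Delta_{k_d}^{(d)}$ adapted to the factorization $\R^\vn=\R^{n_1}\times\cdots\times\R^{n_d}$, where $\Delta_{k_i}^{(i)}$ is a standard isotropic Littlewood--Paley projection on $\R^{n_i}$. For $\alpha>0$ one has the norm equivalence
\[
\|f\|_{Lip(\alpha)}\ \sim\ \sup_{\vec k}\,2^{\alpha\|\vec k\|}\,\|\Delta_{\vec k} f\|_\infty,
\]
where $\|\vec k\|$ is the appropriate combination (e.g.\ $\max_i k_i$ in the isotropic case, $k_1+\cdots+k_d$ in the product case); the latter is the equivalence actually used in the remainder of the argument.

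\emph{Step 2 (Uniform $L^\infty$ bound on frequency blocks).} The heart of the proof is showing that for $\sigma\in\S^{-(n-d)/2}(\R^\vn)$,
\[
\|T\Delta_{\vec k}g\|_\infty\ \le\ C\,\|g\|_\infty
\]
with a constant $C$ independent of $\vec k$ and $g$. Because the phase splits as $\Phi(x,\xi)=\sum_{i=1}^d\Phi_i(x_i,\xi_i)$ and the symbol obeys product-type H\"ormander estimates, $T\Delta_{\vec k}$ factors, up to a controlled remainder handled by the usual symbol calculus, into a tensor product of one-parameter Fourier integral operators $T^{(i)}\Delta^{(i)}_{k_i}$ on $\R^{n_i}$. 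Applying the classical Seeger--Sogge--Stein second-dyadic (cone) decomposition factor-by-factor, each $T^{(i)}$ of order $-(n_i-1)/2$ maps frequency-localized $L^\infty$ data uniformly into $L^\infty$; multiplying across the $d$ factors and using $-(n-d)/2=-\sum_i(n_i-1)/2$ yields exactly the required cancellation between the symbol's negative order and the $\prod_i 2^{k_i(n_i-1)/2}$ loss coming from the tensor product of individual $L^\infty$ estimates.

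\emph{Step 3 (Reassembly).} Because $\Phi_i$ is smooth and positively homogeneous of degree one in $\xi_i$, the spectrum of $T\Delta_{\vec k}f$ is essentially contained in the same product dyadic block as that of $\Delta_{\vec k}f$; a standard almost-orthogonality argument with fattened projections $\widetilde\Delta_{\vec k}$ makes this precise, yielding $\widetilde\Delta_{\vec k}T\Delta_{\vec k'}f=0$ unless $\vec k$ and $\vec k'$ differ by $O(1)$ in each coordinate. Combining Steps 1 and 2 then gives
\[
\|Tf\|_{Lip(\alpha)}\ \sim\ \sup_{\vec k}2^{\alpha\|\vec k\|}\,\|\widetilde\Delta_{\vec k}Tf\|_\infty\ \lesssim\ \sup_{\vec k}2^{\alpha\|\vec k\|}\,\|\Delta_{\vec k}f\|_\infty\ \sim\ \|f\|_{Lip(\alpha)},
\]
which is the desired estimate.

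\emph{Main obstacle.} The principal difficulty is the factor-wise Seeger--Sogge--Stein analysis within a genuinely multi-parameter setting: one must perform the cone decomposition in each of the $d$ frequency blocks $\R^{n_i}$ independently and then combine the resulting tensor-product kernel estimates without accruing loss in any factor. The decoupled phase $\Phi=\sum_i\Phi_i$ together with the product H\"ormander symbol class $\mathbf{S}^m(\R^\vn)$ is precisely what makes this tensorization go through cleanly, but careful bookkeeping of the $d$-fold tensor-product kernels---and verification that the condition $n_i\ge 2$ for each $i$ is used exactly where the classical one-parameter Seeger--Sogge--Stein estimate requires $n\ge 2$---is where the technical work resides.
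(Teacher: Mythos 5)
Your overall strategy---Littlewood--Paley characterization of $Lip(\alpha)$, a uniform bound on frequency-localized pieces of $T$, then reassembly via almost-orthogonality---is the right shape, and the almost-orthogonality step (your Step 3) is essentially the paper's Lemma \ref{orthogonal}. However, there are two genuine gaps in how you execute it, and the paper takes a different (and simpler) route that avoids both.

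First, your key estimate in Step 2 rests on the claim that $T\Delta_{\vec k}$ "factors, up to a controlled remainder, into a tensor product of one-parameter Fourier integral operators." This is not the case: only the \emph{phase} $\Phi=\sum_i\Phi_i$ splits; the symbol $\sigma\in\S^{-(n-d)/2}(\R^\vn)$ is a general product H\"ormander symbol and does \emph{not} factor as $\sigma_1(x_1,\xi_1)\cdots\sigma_d(x_d,\xi_d)$. There is no "usual symbol calculus" that reduces a non-factoring symbol to a tensor product plus a smoothing remainder, so the factor-by-factor application of the one-parameter Seeger--Sogge--Stein theorem is not available. The paper obtains the uniform $L^\infty$ bound you need differently and without any factorization: from the majorization Lemma \ref{majorization} one has $\int|K_{j\ell}(x,y)|\,dx\le C\prod_{i=1}^{d-1}2^{-\ell_i(n_i-1)/2}$, and summing over $\ell$ (this is where $n_i\ge2$ enters, to make $\prod_i\sum_{\ell_i}2^{-\ell_i(n_i-1)/2}$ converge) gives a uniform $L^1\to L^1$ bound for $T_j=TP_j$; duality then yields $\|P_jT^*f\|_\infty\le C\|f\|_\infty$ uniformly in $j$.

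Second, your Step 1 uses the \emph{product} Littlewood--Paley decomposition $\Delta_{\vec k}$ and asserts $\|f\|_{Lip(\alpha)}\sim\sup_{\vec k}2^{\alpha\|\vec k\|}\|\Delta_{\vec k}f\|_\infty$. One inequality here is straightforward, but the reverse inequality picks up a factor of order $j^{d-1}$ (the number of product boxes $\vec k$ with $\max_i k_i= j$), so the stated equivalence for the isotropic Lipschitz norm is not correct as written, and that polynomial loss would break the final chain of inequalities in Step 3. The paper sidesteps this entirely by using the \emph{isotropic} one-parameter projections $P_j$ throughout this corollary (the product decomposition $\phi_{j\ell}$ is used only in the proof of Theorem \ref{classical Hardy space}), for which $\|f\|_{Lip(\alpha)}\sim\sup_j2^{j\alpha}\|P_jf\|_\infty$ is standard, together with the orthogonality $P_kT^*P_j=0$ for $|k-j|\ge2$ to conclude $\|P_jT^*f\|_\infty\lesssim\|P_jf\|_\infty$ and hence $T^*\colon Lip(\alpha)\to Lip(\alpha)$; the estimate for $T$ follows by the symmetric argument. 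In short: your reassembly idea is sound, but you should run it with the isotropic $P_j$, and you should obtain the uniform $L^\infty$ bound from the already-established kernel estimates and duality rather than from a tensorization that the hypotheses do not support.
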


\begin{corollary}\label{h^1}
 Let  $T$ be the Fourier integral operator in Theorem \ref{classical Hardy space}. For each 
 $i=1,2, \dots, d$,  with $n_i \ge 2$,   we have
\begin{equation}
\left \| T f  \right\|_{h^1(\R^n)} \le C  \left\| f \right\|_{ h^1(\R^n)}, \quad \sigma \in \S^{- \frac{n-d}{2}}(\R^\vn ).
\end{equation}
Here,  $h^1(\R^n)$  denotes the  local Hardy  space.  
\end{corollary}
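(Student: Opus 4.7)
The plan is to exploit the atomic characterization of $h^1(\R^n)$ and show that $T$ sends each local atom to an $h^1$-molecule of uniformly bounded norm; summing over an atomic decomposition then yields the claim. Recall that every $f \in h^1(\R^n)$ admits a decomposition $f = \sum_k \lambda_k a_k$ with $\sum_k |\lambda_k| \le C \|f\|_{h^1}$, where each $a_k$ is a local atom: supported in a ball $B_k = B(x_k, r_k)$ with $r_k \le 1$, satisfying $\|a_k\|_\infty \le |B_k|^{-1}$, and with $\int a_k = 0$ whenever $r_k < 1$. By linearity, it suffices to prove a uniform estimate $\|Ta\|_{h^1(\R^n)} \le C$ for every such atom $a$ with supporting ball $B = B(x_0, r)$.

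Split into two regimes. When $r \simeq 1$, no cancellation is required of $a$, and the $L^2$-boundedness of $T$ (the $m = 0$ case of Theorem \ref{classical Hardy space}) together with the compact support of $\sigma$ in $x$ yields $\|Ta\|_{L^2} \le C$ with $Ta$ supported in a fixed compact set; the $h^1$-bound then follows from Cauchy--Schwarz applied to the grand maximal function, using $L^2$-boundedness of the Hardy--Littlewood maximal operator. When $r < 1$, apply the cone and Littlewood--Paley decomposition underlying the proof of Theorem \ref{classical Hardy space}: write $\sigma = \sum_j \sigma_j$ dyadically in $|\xi|$, and within each shell further decompose in terms of cones adapted to each factor $\R^{n_i}$. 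Combining the cancellation of $a$ with stationary phase along each $\Phi_i$ yields a gain of $(n_i - 1)/2$ per factor, totaling $(n-d)/2$, which matches the order $m = -(n-d)/2$ of $\sigma$ and allows one to conclude that $Ta$ is concentrated near the image of $B$ under the canonical relation of $\Phi$ with rapid off-diagonal decay.

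Because $\Phi = \sum_i \Phi_i$ splits, this image factorises as a product of one-parameter images, and $Ta$ can be written as a sum of local $h^1$-molecules, one for each dyadic frequency piece, with $h^1$-norms whose sum is uniformly controlled. The main obstacle will be verifying exactly this last step: one must refine the frequency-localised estimates used in the proof of Theorem \ref{classical Hardy space} to control not only the $L^1$-norm of each piece but also its grand maximal function, ensuring that the multi-parameter molecular pieces sum back to an $h^1$ function. Once this refinement is in hand, the remainder of the argument is a routine Hardy space reassembly.
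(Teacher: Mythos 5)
Your approach differs substantially from the paper's, and the difference matters: the step you flag as ``the main obstacle'' is a genuine gap, not a routine refinement. Theorem~\ref{classical Hardy space}'s proof establishes only that $\|Ta\|_{L^1} \le C$ for $H^1$-atoms; it says nothing about the grand maximal function of $Ta$ or about $Ta$ being an $h^1$-molecule. Passing from an $L^1$ bound on each dyadic/cone piece to a bound on $\|Ta\|_{h^1}$ requires controlling a maximal operator applied to $Ta$, which the existing kernel estimates (Lemma~\ref{majorization}) were not designed to do --- they integrate $|K_{j\ell}(x,y)|$ in $x$ but do not give the pointwise decay with moment conditions that a molecular estimate needs. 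So the assertion that ``the remainder of the argument is a routine Hardy space reassembly'' is precisely where the work would have to go, and your proposal does not supply it.

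The paper sidesteps this entirely by using a different characterization of $h^1$: $\|g\|_{h^1} \simeq \|g\|_{L^1} + \sum_j \|R_j g\|_{L^1}$ where $R_j$ are the \emph{local} Riesz transforms, whose Fourier multipliers $m(\xi)$ lie in $\S^0(\R^n)$. The key algebraic observation is that $m(\xi)\sigma(x,\xi)$ is again a symbol in $\S^{-(n-d)/2}(\R^{\vn})$, so $R_j T^*$ is the adjoint of a multi-parameter FIO of exactly the class covered by Theorem~\ref{classical Hardy space}. One then only needs the already-proved $H^1 \to L^1$ (equivalently, $h^1$-atom $\to L^1$) bound --- no maximal function estimate or molecular decomposition at all --- and the $h^1 \to h^1$ bound for $T^*$ drops out; the bound for $T$ itself is then obtained via the Littlewood--Paley orthogonality Lemma~\ref{orthogonal}. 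If you want to pursue the molecular route, you would need to prove stronger pointwise kernel estimates (decay in $|x - \nabla_\xi\Phi(x,\xi^\nu_j)|$ with enough moments in $y$) and sum them across the $j,\ell,\nu$ decomposition; this is feasible in principle but is additional work that the paper's Riesz transform reduction avoids.
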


\begin{corollary}\label{Sobolev}
 Let  $T$ be the Fourier integral operator in Theorem \ref{classical Hardy space}. For each 
 $i=1,2, \dots, d$,  with $n_i \ge 2$,  suppose  $\sigma \in \S^{m}(\R^\vn ) $,  we have
\begin{equation}
\left \| T f  \right\|_{L^p_s(\R^n)} \le C  \left\| f \right\|_{L^p_s(\R^n)}, \quad 1<p< \infty,
\end{equation}
whenever
\[
m \le -(n-d)\left | \frac{1}{p} -  \frac{1}{2}  \right |.
\]
Here,  $L^p_s(\R^n)$  is  the   Sobolev  space.  
\end{corollary}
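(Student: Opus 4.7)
The plan is to reduce the Sobolev estimate to the $L^p$ bound of Theorem~\ref{classical Hardy space} by conjugating $T$ with Bessel potentials. Recall $\|g\|_{L^p_s(\R^n)}=\|(I-\Delta)^{s/2}g\|_{L^p(\R^n)}$, so the asserted mapping property $T:L^p_s\to L^p_s$ is equivalent to the $L^p$-boundedness of
\[
T_s:=(I-\Delta)^{s/2}\circ T\circ(I-\Delta)^{-s/2}.
\]
My goal is to show that $T_s$ is, modulo an operator with symbol of arbitrarily negative order, a Fourier integral operator with the same decomposable phase $\Phi$ and a symbol in $\S^m(\R^\vn)$. Once this is achieved, Theorem~\ref{classical Hardy space} applies and gives $\|T_s f\|_{L^p}\le C\|f\|_{L^p}$ in the stated range, which translates directly to the Sobolev estimate for $T$.

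The right composition is immediate: $T\circ(I-\Delta)^{-s/2}$ is an FIO with phase $\Phi$ and amplitude $\sigma(x,\xi)(1+4\pi^2|\xi|^2)^{-s/2}$; the pointwise bounds $|\p_\xi^\alpha(1+|\xi|^2)^{-s/2}|\lesssim(1+|\xi|)^{-s}\prod_i(1+|\xi_i|)^{-|\alpha_i|}$ (using $(1+|\xi|)\ge(1+|\xi_i|)$) together with Leibniz's rule place this amplitude in $\S^{m-s}(\R^\vn)$. For the left composition, I would write $T_s$ as the oscillatory integral
\[
T_s f(x)=\iiint e^{2\pi i[(x-y)\cdot\eta+\Phi(y,\xi)]}(1+4\pi^2|\eta|^2)^{s/2}\sigma(y,\xi)(1+4\pi^2|\xi|^2)^{-s/2}\widehat{f}(\xi)\,dy\,d\eta\,d\xi,
\]
and apply stationary phase in the $(y,\eta)$ variables. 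The unique critical point is $y=x$, $\eta=\nabla_x\Phi(x,\xi)$, at which the auxiliary phase reduces to $\Phi(x,\xi)$; the Hessian in $(y,\eta)$ has the unimodular block form $\bigl(\begin{smallmatrix}\p_{yy}^2\Phi & -I\\ -I & 0\end{smallmatrix}\bigr)$, so the expansion is valid. The leading symbol is $(1+4\pi^2|\nabla_x\Phi(x,\xi)|^2)^{s/2}\sigma(x,\xi)(1+4\pi^2|\xi|^2)^{-s/2}$, with corrections of decreasing order in $|\xi|$.

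The main technical obstacle is to verify that this leading amplitude \emph{and} every correction term lie in the product H\"ormander class $\S^m(\R^\vn)$ rather than in a merely isotropic class. Here the decomposability hypothesis $\Phi=\sum_i\Phi_i(x_i,\xi_i)$ is decisive. First, $\nabla_{x_i}\Phi=\nabla_{x_i}\Phi_i$ depends only on $(x_i,\xi_i)$, so $|\nabla_x\Phi|^2=\sum_i|\nabla_{x_i}\Phi_i(x_i,\xi_i)|^2$; combined with degree-one homogeneity of each $\Phi_i$ in $\xi_i$ and the non-degeneracy condition (which forces $\nabla_{x_i}\Phi_i(x_i,\omega)\neq 0$ for $|\omega|=1$, uniformly on the compact $x$-support of $\sigma$), this gives $|\nabla_x\Phi|\asymp|\xi|$, so the two Bessel factors cancel up to order zero. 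Second, since $\nabla_{x_i}\Phi_i$ engages only the $i$-th block of variables, any $\p_{\xi_j}$ with $j\neq i$ annihilates it, while a $\p_{\xi_i}$ costs one power of $(1+|\xi_i|)^{-1}$; the product-type estimates therefore propagate through the Leibniz rule. The same reasoning applies termwise to the asymptotic expansion, each term being a polynomial in derivatives of the amplitude evaluated at the stationary point. With the symbol of $T_s$ placed in $\S^m(\R^\vn)$ modulo a smoothing remainder that is harmless on every $L^p$, Theorem~\ref{classical Hardy space} supplies the required $L^p$ estimate and hence the Sobolev bound.
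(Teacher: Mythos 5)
Your strategy---conjugate $T$ by Bessel potentials and show $T_s=(I-\Delta)^{s/2}T(I-\Delta)^{-s/2}$ is again a multi-parameter FIO with symbol in $\S^{m}(\R^{\vn})$---is a genuinely different route from the paper's. The paper does not compose any operators: it instead uses the Littlewood--Paley orthogonality $P_kTP_j=0$ for $|k-j|\ge 2$ from Lemma~\ref{orthogonal}, the observation that $\mathfrak{F}_{\pm s}$ is essentially $2^{\pm js}$ on the support of $\phi_j$, and then reuses the already-established $H^1\to L^1$ and $L^2\to L^2$ bounds for $T$ dyadic piece by dyadic piece, followed by interpolation. Your route is the one that would be most natural in the single-parameter theory, where the composition calculus for FIOs is standard.

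The gap is in the claim that the composed amplitude lies in $\S^{m}(\R^{\vn})$. For the leading term you must control $G(x,\xi)=(1+4\pi^2|\nabla_x\Phi(x,\xi)|^2)^{s/2}(1+4\pi^2|\xi|^2)^{-s/2}$. You correctly note that $|\nabla_x\Phi|\asymp|\xi|$ on the compact $x$-support, so $G\asymp 1$. The problem is the regularity of $G$ near the coordinate hyperplanes $\{\xi_i=0\}$: the block $\nabla_{x_i}\Phi_i(x_i,\xi_i)$ is homogeneous of degree one in $\xi_i$ and smooth only on $\R^{n_i}\setminus\{0\}$, so $|\nabla_{x_i}\Phi_i|^2$ is not $C^\infty$ at $\xi_i=0$ (generically it is only $C^{1,1}$ there). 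Consequently $\p_{\xi_i}^{\alpha_i}G$ develops negative homogeneity in $|\xi_i|$ once $|\alpha_i|\ge 3$, and the inequality $|\p_{\xi_i}^{\alpha_i}G|\lesssim(1+|\xi|)^0(1+|\xi_i|)^{-|\alpha_i|}$ fails in the region $|\xi_i|\ll 1\ll|\xi|$. Unlike the single-parameter situation, where the singular set $\{\xi=0\}$ is bounded and can be absorbed into a harmless low-frequency piece, the set $\{\xi_i=0\}$ here is unbounded in $\xi$; it is exactly the axis of the cones that the $\delta_\ell$ decomposition of Section~2 is built to handle. The same issue propagates into every correction term of the stationary-phase expansion, since each involves derivatives of the amplitude at the critical point $\eta=\nabla_x\Phi(x,\xi)$. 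So the proposal as written does not deliver $T_s\in\S^{m}(\R^{\vn})$; making it rigorous would require cutting off near each hyperplane $\{\xi_i=0\}$ and re-running a cone-decomposition argument for the excised pieces, at which point one has essentially redone the paper's machinery. This is why the paper's indirect Littlewood--Paley route is the shorter path.
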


The paper is organized as follows.  In section $2$, we shall introduce the necessary notation and make a cone decomposition of $T$. In section $3$, we prove the $L^2$ boundedness of $T$.  In section $4$,  we will further decompose the operator $T$ by Seeger-Sogge-Stein decomposition. 
Section $5$ will be devoted of the majorization of the kernels of $T$    and in the last section shall discuss further $  L^p$-boundedness of more general operator $T$.


\section{Preliminaries}
\setcounter{equation}{0}
It is  well-known  that $T$  of order $0$ is bounded from $L^2$ to itself, and for completeness we will prove this fact in the next section. 
Therefore,  by complex interpolation theorem,  in order to prove Theorem \ref{classical Hardy space}, it suffices to show
 $T$  of order $-(n-d)/2$ and its adjoint operator $T^*$ are both 
  bounded from $H^1(\mathbb{R}^n ) $ to $L^1(\R^n) $.
 Due to the atomic decomposition of the Hardy space $H^1(\R^n)$,  the problem then is reduced to show
\begin{equation}
\int_{\R^n} \left |Ta(x) \right | dx \le  C, \quad  \sigma \in \mathbf{S}^{-(n-d)/2}(\R^\vn),
\end{equation}
and 
\begin{equation}
\int_{\R^n} \left |T^*a(x) \right | dx \le  C, \quad  \sigma \in \mathbf{S}^{-(n-d)/2}(\R^\vn).
\end{equation}
 Here  $a(x)$ is a $H^1(\R^n) $ atom, that is, $a(x)$ is supported on a ball $B$ such that $||a||_{L^\infty}\le |B|^{-1} $ and $\int_{\R^n} a(x)dx=0$.
Now fix an atom $a (x)$ supported  on  a $B_r(\overline{y}) \subset \mathbb{R}^n$ with radius $r$, we can actually assume that    $r<1$, otherwise, the estimate is trivial.  Because of our assumption the symbol $\sigma(x,\xi)$ has a compact support in $x$-variable, we have
\begin{equation}\label{trivial estimate}
\left\|T  a \right\|_{L^1}\le \left\|T  a \right\|_{L^2}\le C \left \| a \right \|_{L^2}\le C r^{-n/2}\le  C.
\end{equation}
The first inequality holds because $T $ has fixed compact support and Cauchy-Schwarz inequality; the second follows from the well-known  $L^2$-boundedness of $T $ of order $0$.  In the following, we primarily focus on the operator  $T$;   In most cases,   the same argument is applicable to $T^*$,  and any differences will be pointed out as needed.

We will employ a similar approach to the one used for one-parameter Fourier integral operators to prove the theorem for multi-parameter Fourier integral operators. Specifically, we will divide $\R^n$  into two parts: the region of influence and its complement. For the region of influence, we will use the $L^2$ estimates of $T$.  For the region outside of influence, we  need critical control over the operator kernel. However, multi-parameter Fourier integral operators present different challenges. For example, in the dyadic ring  $\{ \xi : 2^{ j-1} \le |\xi| \le 2^{j+1}\}, ~j>0$,
in the one-parameter case, we have  $ |\p_\xi \sigma(x,\xi)| \le C  2^{-j} $,  but this inequality does not hold in the multi-parameter case. To precisely control the magnitude of $|\xi_i|$  in each subspace, we will perform a multi-parameter Littlewood-Paley decomposition and introduce cone decomposition.

\subsection{Cone decomposition for $\S^m(\R^\vn )$}

By examining the differential inequality of $\sigma(x, \xi)$, that is,
\begin{equation*}
\left|\p_\xi^\alpha\p_{x}^\beta \sigma(x,\xi)\right|~\leq~C_{\alpha\beta}\left(1+|\xi|\right)^m \prod_{i=1}^d \left({1\over 1+|\xi_i|}\right)^{|\alpha_i|}
\end{equation*}
for any multi-indices $\alpha, \beta$, the differential inequality holds. We can partition the $\xi$-space into different regions, where in each region we know the size of each $|\xi_i|$ and the magnitude of $|\xi|$ is approximately $\sim 2^j$. Therefore, it is natural to consider the following multi-parameter Littlewood-Paley decomposition.  Let $\varphi$ be a smooth function on $\mathbb{R}$ that satisfies
 \begin{equation}\label{varphi}
\varphi(t)=1 \quad \textit{if } \quad |t|\le 1; \quad  \varphi(t)=0, \quad \textit{if}  \quad |t|\ge 2.
\end{equation}
Define 
\begin{equation}\label{phi_j}
\begin{array}{lc} \displaystyle
\phi_{j}(\xi) =\varphi(2^{-j}|\xi|) -\varphi(2^{-j+1}|\xi|), \quad j\in \mathbb{Z}, j>0;
 \quad 
\phi_{0} (\xi) = \varphi(|\xi|).
\end{array}
\end{equation}
and

\begin{equation}
\phi_{j-\ell_i}(\xi_i) =\varphi(2^{-j+\ell_i}|\xi_i|) -\varphi(2^{-j+\ell_i+1}|\xi_i|) ,\quad j\in \mathbb{Z}, \quad j\ge \ell_i,
\end{equation}
\[
\phi_{ j \ell}(\xi)= \prod_{i=1}^d \phi_{j-\ell_i}(\xi_i).
\]
Here, $\xi = (\xi_1, \xi_2, \dots, \xi_d) \in \mathbb{R}^{n_1} \times \mathbb{R}^{n_2} \times \cdots \times \mathbb{R}^{n_d}$, and $\ell = (\ell_1, \ell_2, \dots, \ell_d)$. Due to symmetry, we can assume without loss of generality that each $\ell_i$ is a non-negative integer, with $\ell_d = 0$. Consequently, we have $|\xi_i| \sim 2^{j - \ell_i}$ and $|\xi_d| \sim |\xi| \sim 2^j$.

Note that the support of $\phi_{j\ell}(\xi)$ is contained within the following set, depicted in the red region of Figure \ref{fig:example8}.

\begin{equation}
\left \{\xi \in \R^\vn : 2^{j - \ell_i-1} \le |\xi_i|\le 2^{j-\ell_i+1},~~1 \le i \le d \right  \}.
\end{equation}

\begin{figure}[htbp]
  \centering
  \includegraphics[width=7cm]{./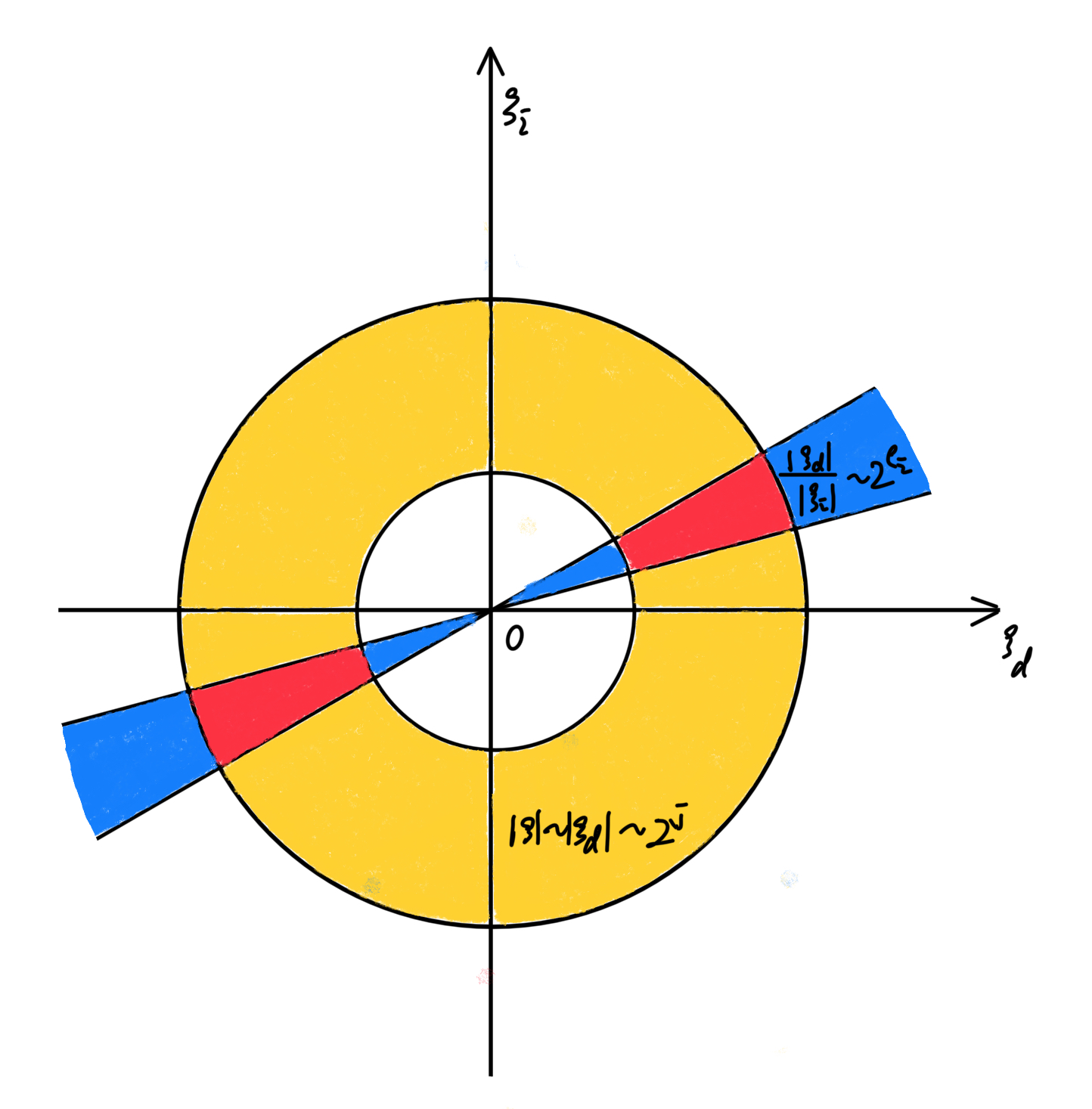}
  \caption{The support of $\phi_{j \ell}(\xi) $ }
  \label{fig:example8}
\end{figure}
 
And we have 
\[
\left |\p_\xi^\alpha \phi_{j \ell}(\xi) \right  |\le  C_\alpha \prod_{ i =1}^d  2^{- (j -\ell_i) |\alpha_i|}, 
\]
Now define partial operators
\begin{equation}\label{partial operator jell}
T_{j \ell }f(x)=\int_{\mathbb{R}^n} f(y)K_{j \ell  }(x, y)dy, ~~   K_{j \ell   }(x, y)=\int_{\mathbb{R}^n} e^{2\pi i ( \Phi(x,\xi)-y\cdot \xi ) }\sigma(x,\xi) \phi_{j \ell} (\xi) d\xi.
\end{equation}
Then, we have the "partition of unity",
\begin{equation*}
 \sum_{j\ge 0 } \prod_{i=1 }^{d-1}\sum_{0 \le \ell_i \le j} \phi_{j \ell} (\xi)=
 \prod_{i =1}^{d-1} \sum_{\ell_i \ge 0} \sum_{j \ge \ell_M} \phi_{j \ell}(\xi) =   \prod_{i =1}^{d-1} \sum_{\ell_i \ge 0}  \delta_\ell(\xi),
\end{equation*}
where
\[
\ell_M = \max_{1 \le i \le d} \ell_i, \quad \delta_\ell(\xi) = \sum_{j \ge \ell_M} \phi_{j \ell}(\xi).
\]

Note that the support of $\delta_{\ell}(\xi)$ is roughly contained within a cone, which is why the above decomposition is referred to as a cone decomposition. This is illustrated in the blue region of Figure \ref{fig:example9}.

\begin{figure}[htbp]
  \centering
  \includegraphics[width=7cm]{./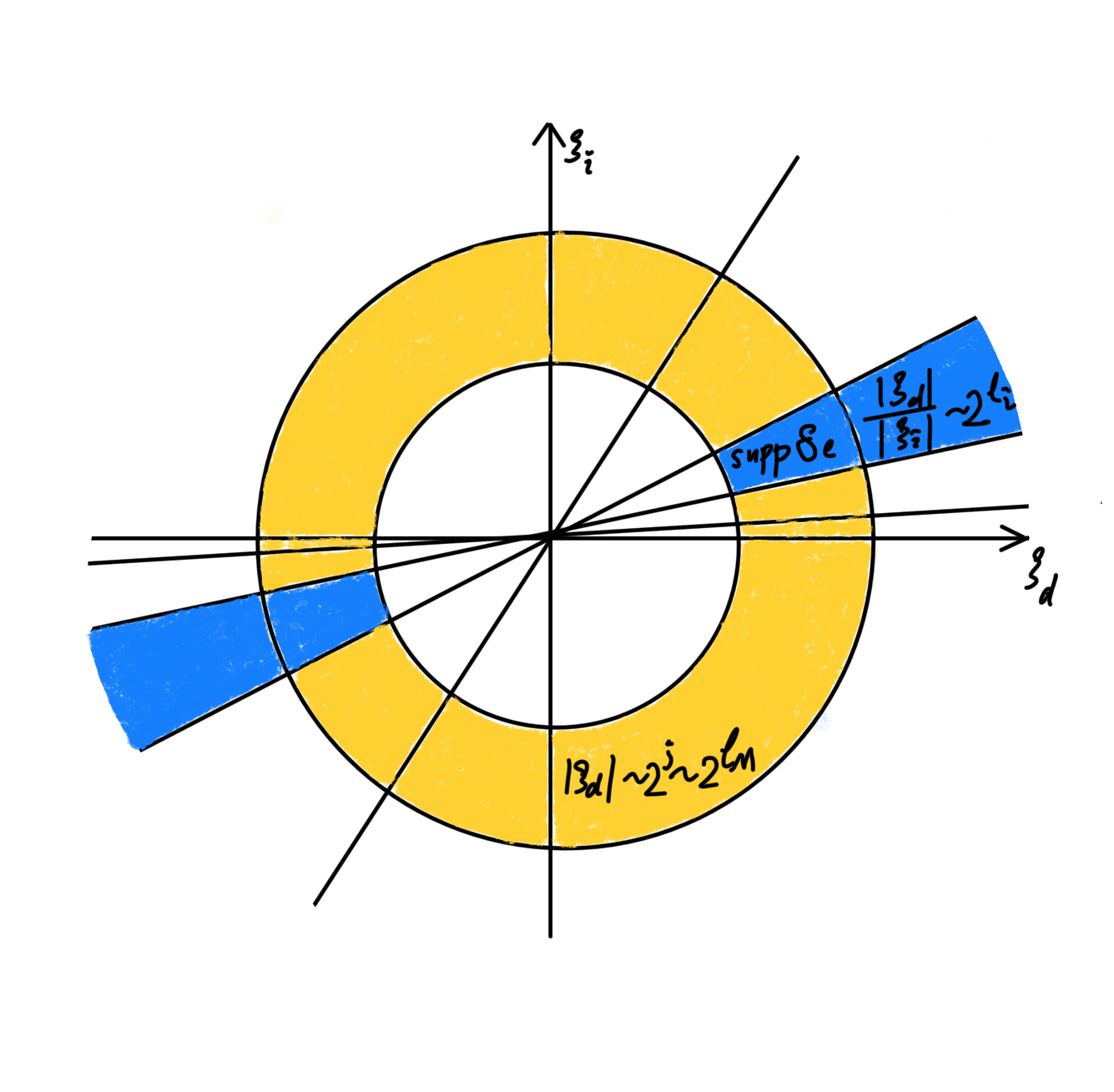}
  \caption{ The support of  $\delta_{\ell}(\xi) $  }
   \label{fig:example9}
\end{figure}

\begin{equation}\label{partial operator ell}
T_{ \ell }f(x)=\int_{\mathbb{R}^n} f(y)K_{ \ell  }(x, y)dy, \quad   K_{ \ell   }(x, y)=\int_{\mathbb{R}^n} e^{2\pi i ( \Phi(x,\xi)-y\cdot \xi ) }\sigma(x,\xi) \delta_{\ell} (\xi) d\xi.
\end{equation}
Thus, it suffices to show that for any $H^1(\R^n)$ atom $a$, the following estimate holds.
\[
\left \|  \sum_{j\ge 0 } \prod_{i=1 }^{d-1}\sum_{0 \le \ell_i \le j} T_{j \ell}  a \right \|_{L^1(\R^n)} \le C,
\quad \text{or} \quad 
\left \| \prod_{i =1}^{d-1} \sum_{\ell_i \ge 0}  T_{ \ell}  a \right \|_{L^1(\R^n)} \le C.
\]

\section{$L^2$ estimates }
In this section, we  prove the following lemma concerning $L^2$-boundedness of $T$.
\begin{lemma}\label{L^2 estimates lemma}
Let $T$ be defined as in Theorem \ref{classical Hardy space}, $T_\ell$ be defined as in (\ref{partial operator ell}), and $T_\ell^*$ be the adjoint operator of $T_\ell$. Then 
\[
\left\|T f \right\|_{L^2(\R^n )  } \le C  \left\|f  \right \|_{L^2(\R^n)}, \qquad \sigma \in \S^0(\R^\vn).
\]
Moreover, if  $\sigma \in \S^{-(n-d)/2}(\R^{\vn})$, then we have
\begin{equation}\label{L^p to L^2}
\begin{array}{lc}\displaystyle
 \left\|  T_\ell f \right\|_{L^2(\R^n)}
   \le C \prod_{i =1}^{d-1}  2^{-\ell_i n_i \frac{n-d}{2n} }  \left\|  f \right\|_{L^p(\R^n)},
   \quad  \frac{1}{p}=\frac{1}{2} + \frac{n-d }{2n}.
   \end{array}
\end{equation}
\begin{equation}\label{L^2 to L^q}
\begin{array}{lc}\displaystyle
~~~~~~~ \left\|  T^*_\ell f \right\|_{L^2(\R^n)}
   \le C \prod_{i =1}^{d-1}  2^{-\ell_i n_i \frac{n-d}{2n} }  \left\|  f \right\|_{L^p(\R^n)},
   \quad  \frac{1}{p}=\frac{1}{2} + \frac{n-d }{2n}.
   \end{array}
\end{equation}
\end{lemma}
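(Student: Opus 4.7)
The proof has three parts: (a) the $L^2$-boundedness of $T$ with $\sigma\in\S^0(\R^\vn)$, via a $TT^*$ argument; (b) a per-scale $L^p\to L^2$ bound for $T_{j\ell}$, combining the order $-(n-d)/2$ with a Bernstein estimate on the cone support; and (c) summation over scales $j$ via almost-orthogonality and the Besov embedding $L^p\hookrightarrow B^0_{p,2}$. The adjoint $T_\ell^*$ is then handled by symmetry.

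\textbf{Part (a).} Although $\S^0(\R^\vn)$ is strictly larger than the classical H\"ormander class, the $L^2$-boundedness follows by a $TT^*$ argument tailored to the product structure. The kernel
\[
K(x,y)=\int e^{2\pi i(\Phi(x,\xi)-\Phi(y,\xi))}\sigma(x,\xi)\overline{\sigma(y,\xi)}\,d\xi
\]
has phase difference $\sum_{i=1}^{d}\bigl[\Phi_i(x_i,\xi_i)-\Phi_i(y_i,\xi_i)\bigr]$, so one may integrate by parts in each $\xi_i$ separately, using the non-degeneracy of $\Phi_i$ together with the refined symbol estimates $\prod_i(1+|\xi_i|)^{-|\alpha_i|}$, to obtain the factorized decay $|K(x,y)|\le C\prod_i(1+|x_i-y_i|)^{-N}$. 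Schur's lemma then gives $\|TT^*\|_{L^2\to L^2}\le C$, hence the first claim.

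\textbf{Parts (b) and (c).} Write $T_\ell=\sum_{j\ge\ell_M}T_{j\ell}$ and observe $T_{j\ell}f=T(P_{j\ell}f)$, where $P_{j\ell}$ denotes the Fourier multiplier with symbol $\phi_{j\ell}$. Factoring $\sigma(x,\xi)=a(x,\xi)(1+|\xi|)^{-(n-d)/2}$ with $a\in\S^0(\R^\vn)$, part~(a) yields $T\colon H^{-(n-d)/2}\to L^2$, and since $(1+|\xi|)^{-(n-d)/2}\sim 2^{-j(n-d)/2}$ on $\mathrm{supp}\,\phi_{j\ell}$,
\[
\|T_{j\ell}f\|_{L^2}\le C\cdot 2^{-j(n-d)/2}\|P_{j\ell}f\|_{L^2}.
\]
Hausdorff-Young applied to the cone support of volume $V_{j\ell}=2^{jn}\prod_i 2^{-\ell_i n_i}$ gives
\[
\|P_{j\ell}f\|_{L^2}\le CV_{j\ell}^{1/p-1/2}\|P_{j\ell}f\|_{L^p}=C\cdot 2^{j(n-d)/2}\prod_{i=1}^{d-1}2^{-\ell_i n_i(n-d)/(2n)}\|P_{j\ell}f\|_{L^p},
\]
so the $j$-powers cancel. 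The degree-one homogeneity of $\Phi$ ensures $T_{j\ell}f$ has approximate Fourier support in $|\xi|\sim 2^j$, giving almost-orthogonality in $L^2$. Combined with the Mikhlin-type bound $\|P_{j\ell}f\|_{L^p}\le C\|P_jf\|_{L^p}$ (valid since $\phi_{j\ell}$ is a tensor-product smooth multiplier localized on $\mathrm{supp}\,\phi_j$) and the embedding $L^p\subset B^0_{p,2}$ (valid for $p=2n/(2n-d)\in(1,2)$, a consequence of $F^0_{p,2}\subset B^0_{p,2}$ for $p\le 2$), we obtain
\[
\|T_\ell f\|_{L^2}^2\le C\prod_{i=1}^{d-1}2^{-\ell_i n_i(n-d)/n}\sum_j\|P_jf\|_{L^p}^2\le C\prod_{i=1}^{d-1}2^{-\ell_i n_i(n-d)/n}\|f\|_{L^p}^2,
\]
which is \eqref{L^p to L^2}.

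\textbf{Adjoint and obstacle.} For \eqref{L^2 to L^q}, $T_\ell^*$ is itself a Fourier integral operator with phase $-\Phi$ (admitting the same additive decomposition and non-degeneracy) and amplitude $\overline\sigma\in\S^{-(n-d)/2}(\R^\vn)$, so the argument above applies verbatim. The main technical obstacle is the summation in part~(c): the per-scale bound is scale-invariant in $j$, so the sum converges only thanks to the Besov embedding $L^p\hookrightarrow B^0_{p,2}$, which crucially requires $p\le 2$; in our setting this holds automatically since $n>d$ under the hypothesis $n_i\ge 2$ for each $i$.
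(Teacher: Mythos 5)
Your Part (a) has a genuine gap. You work with $TT^*=SS^*$ and claim $|K(x,y)|\le C\prod_i(1+|x_i-y_i|)^{-N}$ after integrating by parts in $\xi$; but the $\xi$-integral in $K(x,y)=\int e^{2\pi i(\Phi(x,\xi)-\Phi(y,\xi))}\sigma(x,\xi)\overline{\sigma(y,\xi)}\,d\xi$ is over all of $\R^n$ and the integrand is only $O(1)$, so near the diagonal the integral is not absolutely convergent. Integration by parts in $\xi_i$ produces the factor $|\nabla_{\xi_i}[\Phi_i(x_i,\xi_i)-\Phi_i(y_i,\xi_i)]|^{-N_i}\sim|x_i-y_i|^{-N_i}$, which blows up, not $(1+|x_i-y_i|)^{-N_i}$; the kernel is genuinely singular near $x=y$ and Schur's lemma does not apply without a further dyadic decomposition and almost-orthogonality argument (Cotlar--Stein). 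The paper sidesteps this entirely by using $S^*S$ instead: there the kernel $K(\xi,\eta)$ is an integral over the compact $x$-support of $\sigma$, so it is trivially bounded at $\xi=\eta$, and integration by parts in $x$ (not $\xi$), via the estimate $|\nabla_x(\Phi(x,\xi)-\Phi(x,\eta))|\gtrsim|\xi-\eta|$ inside a narrow cone, produces the integrable decay $(1+|\xi-\eta|)^{-N}$.

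Your Part (b) is a legitimate alternative to the paper's argument and is worth noting. The paper reduces, via the same factorization $\sigma=a\cdot(1+|\xi|^2)^{-(n-d)/4}$, to the Fourier multiplier operator $\mathcal{T}_\ell$ and then proves an explicit pointwise kernel bound $|\mathcal{R}_\ell(x)|\lesssim\prod_{i<d}2^{-\ell_in_i(n-d)/2n}|x_i|^{-n_i(1-(n-d)/2n)}$, finishing with Hardy--Littlewood--Sobolev; your route of a per-scale Bernstein estimate on $\mathrm{supp}\,\phi_{j\ell}$, almost-orthogonality in $j$, and the embedding $L^p\hookrightarrow B^0_{p,2}$ (valid as $p\le 2$) reaches the same constant $\prod 2^{-\ell_in_i(n-d)/2n}$ and cancels the $j$-powers as you compute, though the almost-orthogonality claim deserves to be argued rather than attributed vaguely to homogeneity (it is in fact the content of the paper's Lemma \ref{orthogonal}). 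However, your treatment of the adjoint is wrong: you cannot run Part (b) verbatim for $T_\ell^*$. Writing $T_{j\ell}^*=P_{j\ell}T^*$, the frequency localization $P_{j\ell}$ now sits on the \emph{output} rather than the input, so the Bernstein step, which gained the small volume $V_{j\ell}$ from the restricted Fourier support of $P_{j\ell}f$, has nothing to act on: applying Bernstein to the output $P_{j\ell}T^*f$ still requires an $L^p$ bound on $T^*$ that is not available. The $\ell$-decay in \eqref{L^2 to L^q} therefore does not come out of this scheme. The paper handles $T_\ell^*$ by a genuinely different duality argument, bounding $\|\mathcal{T}_\ell^*f\|_{L^2}^2=\langle\mathcal{T}_\ell\mathcal{T}_\ell^*f,f\rangle$ via H\"older and then estimating the kernel of $\mathcal{T}_\ell\mathcal{T}_\ell^*$ (using the non-degeneracy of each $\Phi_i$ and integration by parts in $\xi$) before applying Hardy--Littlewood--Sobolev.
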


\begin{proof}
By Plancherel's theorem, matters are then reduced to a similar assertion for the operator $S$,
\[
Sf (x) =\int_{\R^n } e^{2\pi i \Phi(x, \xi) } \sigma(x, \xi ) f(\xi) d\xi.
\]
whose adjoint operator is 
\[
S^*f (\xi)=\int_{\mathbb{R}^n} e^{-2\pi i\Phi(x,\xi)} \overline{\sigma}(x,\xi) f(x)dx.
\]
We aim to show $S^*S$ is bounded on $L^2(\mathbb{R}^n)$, we write
\[
S^*S f(\xi)= \int_{\mathbb{R}^n}f(\eta) K (\xi,\eta) d\eta,
\]
where
\[
K(\xi,\eta)=\int_{\mathbb{R}^n} e^{2\pi \mathbf{i}  \left (\Phi(x,\eta)-\Phi(x,\xi) \right )} \sigma(x,\eta)\overline{\sigma}(x,\xi) dx.
\]
Let $\mathbf{c}$ be a small positive constant.  We define an {\itshape narrow cone} as follows: suppose $\xi$ and $\eta$ belong to a same narrow cone and $|\eta|\leq|\xi|$. By writing $\eta=\rho\xi+\eta^\dagger$ for $0\leq\rho\leq1$ and $\eta^\dagger$ perpendicular to $\xi$, we require  $|\eta^\dagger|\leq \mathbf{c}\rho|\xi|$. The value of $\mathbf{c}$ depends on $\Phi$.

Clearly,  we can decompose the frequency space for which $S$ or $S^*$  can be written as a finite sum of  partial operators.  Each one of them has  a symbol  supported in such a narrow cone.
Recall the estimate given at { 3.1.1}, chapter IX of Stein \cite{S93}.
We have
\begin{equation}\label{Phi property}
    \Big|\nabla_x \Big(\Phi(x,\xi)-\Phi(x,\eta)\Big)\Big|~\ge~C_\Phi~|\xi-\eta|,
\end{equation}
whenever $\xi$ and $\eta$ belong to a same narrow cone.  
Recall that $\sigma(x,\xi)$ has a compact support in $x$. Hence that $K(\xi,\eta)$ is bounded in norm. By using (\ref{Phi property}), 
an $N$-fold integration by parts  $w.r.t~x$ gives
\begin{equation}
    \begin{array}{lr}\displaystyle
\Big|K (\xi,\eta)\Big|~\leq~C_N ~
|\xi-\eta|^{-N}
\Big|\int_{\mathbb{R}^{n}} e^{2\pi  i \left(\Phi(x,\eta)-\Phi(x,\xi)\right)} \nabla_x^N\Big(\sigma(x,\eta)\bar{\sigma}(x,\xi)\Big)dx\Big|
\end{array}
\end{equation}
for $\xi\neq\eta$.  Together with (\ref{product class}), we have
\begin{equation}\label{S sharp estimate}
 \Big| K (\xi,\eta)\Big|~\leq~C_N ~\Big({\frac{1}{1+|\xi-\eta|}}\Big)^{N}
\end{equation}
for every $N\ge1$. To conclude the $L^2$-boundedness of $T$, we write
\begin{equation}
    \begin{array}{lr}\displaystyle  
\Big\| S^* Sf\Big\|_{L^2(\mathbb{R}^{n})}~=~\left \{\int_{\mathbb{R}^{n}}\left |\int_{\mathbb{R}^{n}} f(\eta) K (\xi,\eta)d\eta \right|^2d\xi\right \}^{\frac{1}{2}}
\\\\ \displaystyle
~=~\left \{\int_{\mathbb{R}^{n}}  \left|\int_{\mathbb{R}^{n}} f(\xi-\zeta) K (\xi,\xi-\zeta)d\zeta \right|^2d\xi\right\}^{\frac{1}{2}}\qquad (~ \zeta=\xi-\eta ~) 
\\\\ \displaystyle
~\leq~~\int_{\mathbb{R}^{n}} \left \{\int_{\mathbb{R}^{n}} \left |f(\xi-\zeta)\right |^2 \right | K (\xi,\xi-\zeta)\Big|^2 d\xi \Big\}^{\frac{1}{2}}d\zeta
\\\\ \displaystyle~~\leq~C_N~\int_{\mathbb{R}^{n}} \left \{\int_{\mathbb{R}^{n}} \left | f(\xi-\zeta)\Big|^2 \right ({\frac{1}{1+|\zeta|}}\Big)^{2N} d\xi \right \}^{\frac{1}{2} }d\zeta\qquad \hbox{\small{by (\ref{S sharp estimate})}}
\\\\ \displaystyle
~=~C_N ~\left \| f\right \|_{L^2(\mathbb{R}^{n})}\int_{\mathbb{R}^{n}}  \left ({\frac{1}{1+|\zeta|}}\right )^{N}d\zeta
\\\\ \displaystyle
~\leq~C~\left \| f\right \|_{L^2(\mathbb{R}^{n})},\qquad \hbox{\small{for $N$  sufficiently large. }}
\end{array}
\end{equation}



Now we turn to the proof of estimate (\ref{L^p to L^2}). Note that we can write  $T_{\ell} f(x)  $ as 
\[
\begin{array}{lc}\displaystyle
T_{\ell} f(x)  =\int_{\mathbb{R}^n } e^{2\pi  i \Phi(x,\xi) }  \sigma(x,\xi) \delta_\ell(\xi)  \widehat{f}(\xi )d\xi
\\\\ \displaystyle
=  \int_{\mathbb{R}^n } e^{2\pi i  \Phi(x,\xi) }  \left[  \sigma(x,\xi) (1+ |\xi|^2)^{ \frac{n-d}{4} }   \right]  \widehat{ \mathcal{T}_\ell f}(\xi) d\xi,
\end{array}
\]
where 
\[
\widehat{ \mathcal{T}_\ell f}(\xi)  =  (1+ |\xi|^2)^{- \frac{n-d}{4} }  \delta_\ell(\xi)  \widehat{f}(\xi ).
\]
Note that 
 \[
 \sigma(x,\xi) (1+ |\xi|^2)^{ \frac{n-d}{4} } \in \S^0(\R^\vn), 
 \]
therefore, by applying the $0$-th order multi-parameter Fourier integral operator's $L^2$-boundedness and the Plancherel theorem, we only need to prove that
\[
  \left\|  \mathcal{T}_\ell f \right\|_{L^2(\R^n)}
   \le C \prod_{i =1}^{d-1}  2^{-\ell_i n_i \frac{n-d}{2n} }  \left\|  f \right\|_{L^p(\R^n)},
   \quad  \frac{1}{p}=\frac{1}{2} + \frac{n-d }{2n}.
\]
Now, rewrite $\mathcal{T}_\ell$ as
\[
\mathcal{T}_{\ell} f (x) = \int_{\R^n} f(y) \mathcal{R}_\ell(x-y) dy,
\]
\[
 \mathcal{R}_\ell(x) = \int_{\R^n} e^{2\pi \i x \cdot \xi }  (1+ |\xi|^2)^{- \frac{n-d}{4} }  \delta_\ell(\xi)  d\xi.
\]
We first prove 
\[
\left| \mathcal{R}_\ell (x ) \right| 
\le C \prod_{i =1}^{d-1}    2^{-\ell_i n_i \frac{n-d}{2n} } 
|x_i|^{-n_i(1-(n-d)/2n)}.
\]
We consider 
\[
 \mathcal{R}_{j \ell}(x) = \int_{\R^n} e^{2\pi \i x \cdot \xi }  (1+ |\xi|^2)^{- \frac{n-d}{4} }  \phi_{j \ell}(\xi)  d\xi.
\]
Note that  $|\xi_i| \sim 2^{j-\ell_i}$ when  $ 1\le i \le d$, thus 
\begin{equation}\label{support}
|\mathbf{supp} \phi_{j \ell }(\xi)| \le \mathfrak{C} \prod_{i=1}^{d} 2^{j-\ell_i} ,
\end{equation}
and we have the following differential inequality:
\begin{equation}\label{differential ineq}
\left | \partial_{\xi}^{\alpha}  \left (1+|\xi|^2 \right )^{-(n-d)/4} \phi_{j \ell }(\xi)  \right|\le C_\alpha  2^{-j(n-d)/2} 2^{-(j-\ell_i)|\alpha_i|}.
\end{equation}
By performing $|\alpha_i|$ integrations by parts with respect to the variable $\xi_i$, for $1 \le i \le d$, we obtain
\begin{equation}\label{kernel size}
    \begin{array}{lr}\displaystyle
\left |\mathcal{R}_{j \ell }(x ) \right  | \le 
\left |  \int_{\R^n } e^{ 2 \pi i x  \cdot \xi   }  
\left(1+|\xi|^2 \right )^{-(n-d)/4}   \phi_{j \ell}(\xi)
d\xi  \right|
\\\\ \displaystyle
\leq C_{\alpha}
\prod_{i=1}^d |x_i|^{-|\alpha_i|} 
\left |
\int_{ \R^n}  e^{2\pi i x \cdot \xi }  \p_\xi^\alpha \left  [(1+|\xi|^2)^{-(n-d)/4} \phi_{j \ell}(\xi)  \right ]d\xi
\right |
\\\\ \displaystyle
\le  C_\alpha 
2^{-j(n-d)/2 }  \prod_{i=1}^d  \left (2^{j-\ell_i}|x_i| \right)^{-|\alpha_i| }2^{(j-\ell_i) n_i}
 \qquad \hbox{\small{Applying  (\ref{support})-(\ref{differential ineq})}}
\\\\ \displaystyle
=C_\alpha   \prod_{i=1}^d  2^{-\ell_in_i (n-d)/2n} \prod_{i =1}^{d} \left (2^{j-\ell_i}|x_i| \right )^{-|\alpha_i| }2^{(j-\ell_i)n_i (1-(n-d)/2n)}.
\end{array}
\end{equation}

Since  $n-d>0$,  we choose 
\begin{equation}\label{choose parameter}
    \begin{array}{cc}\displaystyle
|\alpha_i|=0 \quad \hbox{if } \quad |x_i|\leq 2^{-(j-\ell_i)}\quad  \quad |\alpha_i|=n_i \quad \hbox{if } \quad |x_i|>2^{-(j-\ell_i)}, \quad  1\le i \le d.
\end{array}
\end{equation}
From  (\ref{kernel size}) and  (\ref{choose parameter}), we have

\begin{equation*}
  \begin{array}{lr}\displaystyle
\left|  \mathcal{R}_{ \ell}(x) \right | \le C_\alpha
 \sum_{j\ge \ell_M  } \left\{   \prod_{i=1}^d  2^{-\ell_in_i (n-d)/2n }  \left(2^{j-\ell_i}|x_i| \right)^{-|\alpha_i|}2^{(j-\ell_i)n_i (1-(n-d)/2n)}\right\}
  \\\\ \displaystyle
\le   C_\alpha   \prod_{i=1}^d  2^{-\ell_in_i (n-d)/2n } \left\{    \sum_{j\ge \ell_M  }  \left(2^{j-\ell_i}|x_i| \right)^{-|\alpha_i| }2^{(j-\ell_i)n_i (1-(n-d)/2n)}\right\}
  \\\\ \displaystyle
 ~\leq C ~\prod_{i =1}^d  2^{-\ell_in_i (n+d)/2n }  
\left \{\sum_{|x_i|\le 2^{-j+\ell_i}}2^{ (j-\ell_i) n_i(n+d)/2n} 
+\left ({\frac{1}{|x_i|}} 
\right )^{n_i} \sum_{|x_i|>2^{-j+\ell_i}} 2^{ -(j-\ell_i) n_i(n-d)/2n} \right \}
\\\\ \displaystyle
\le C \prod_{i=1}^{d-1} 2^{-\ell_in_i (n-d)/2n}
\left ({\frac{1}{|x_i|}}\right )^{n_i\left (1-{\frac{n-d}{2n}}\right )},\quad \ell_d=0.
\end{array}  
\end{equation*}
For each subspace, applying the Hardy-Littlewood-Sobolev inequality and the Minkowski integral inequality, we have
\[
\begin{array}{lc}\displaystyle 
\left \{ \int_{\R^n} 
\left | \mathcal{T}_{\ell} f(x) \right |^2 d x \right \}^{\frac{1}{2}}
\\\\ \displaystyle
\le \mathfrak{C}
\prod_{i =1}^{d-1} 2^{-\ell_in_i (n-d)/2n}
\left \{\int_{\R^n}
\left\{ 
\int_{\R^n } 
|f(y)|  \prod_{i =1}^{d}|x_i-y_i|^{-n_i \left (1-\frac{n-d}{2n} \right )}d y  \right \} ^2 d x  \right \}^{ \frac{1}{2}}
 \\\\ \displaystyle
   \le \mathfrak{C} 
\prod_{i =1}^{d-1} 2^{-\ell_in_i (n-d)/2n}
  \left\|f  \right\|_{L^{p}  (\R^n) },
   \quad \textit{where} \quad  \frac{1}{p}=\frac{1}{2} + \frac{n-d }{2n}.
\end{array}
\]
Therefore, we have proved (\ref{L^p to L^2}).

 
 Now consider (\ref{L^2 to L^q}). We rewrite $T^*_\ell f$ as
  \[
T^*_\ell f (x) =  \int_{\R^{n}} e^{ 2\pi \i x \cdot \xi  }  \mathcal{T}^*_\ell f(\xi) d\xi, 
\]
where
\[
\mathcal{T}^*_\ell f(\xi) =\int_{\R^{n}}  e^{-2 \pi \i \Phi(y, \xi) } \overline{ \sigma}(y, \xi)  \overline{\delta}_\ell(\xi) f(y)d y.
\]
By applying the Plancherel theorem, we only need to prove that
 \[
  \left\|  \mathcal{T}^*_\ell f \right\|_{L^2(\R^n)}
   \le C \prod_{i =1}^{d-1}  2^{-\ell_i n_i \frac{n-d}{2n} }  \left\|  f \right\|_{L^p(\R^n)},
   \quad  \frac{1}{p}=\frac{1}{2} + \frac{n-d }{2n}.
 \]
Note that, according to Hölder's inequality, for $\frac{1}{p} + \frac{1}{q} = 1$,
\[
\begin{array}{lc}\displaystyle
\left\|\mathcal{T}^*_\ell f \right \|^2_{L^2(\R^{n})} = \left<\mathcal{T}^*_\ell f, \mathcal{T}^*_\ell f \right>
= \left<\mathcal{T}_\ell \mathcal{T}^*_\ell f,  f \right> 
 \le \left\|\mathcal{T}_\ell \mathcal{T}^*_\ell f \right\|_{L^{q}(\R^{n}) }
 \left\| f \right\|_{L^p(\R^{n})}. 
 \end{array}
\]
Therefore, the inequality (\ref{L^2 to L^q}) can be simplified to
\begin{equation}\label{lemma reduction}
\begin{array}{lc}\displaystyle
 \left \| \mathcal{T}_\ell \mathcal{T}^*_\ell f  \right \|_{L^{q}(\R^{n}) }
   \le C
\prod_{i =1 }^{d-1} 2^{-\ell_i n_i \frac{n-d}{n}}
  \left\|f  \right\|_{L^p(\R^n) } ,   \quad  \frac{1}{p}=\frac{1}{2} + \frac{n-d }{2n}.
   \end{array}
\end{equation}
Now let us prove the inequality (\ref{lemma reduction}). We represent $\mathcal{T}_\ell \mathcal{T}^*_\ell$ using its kernel.
\[
\mathcal{T}_\ell \mathcal{T}^*_\ell f (x)=\int_{\R^{n}} f(y) G_\ell(x, y) d y,
\]
where  the kernel   $G_\ell(x, y )$ is 
\[
\begin{array} {lc} \displaystyle
G_\ell (x, y )= \int_{\R^{n}} e^{ 2\pi \i (\Phi(x, \xi) - \Phi(y, \xi)) } \overline{\sigma}(y,\xi) \overline{\delta}_\ell(\xi) \sigma(x, \xi) \delta_\ell(\xi) d \xi.
\end{array}
\]
And define 
\[
G_{j \ell }(x, y )= \int_{\R^{n}} e^{ 2\pi \i (\Phi(x, \xi) - \Phi(y, \xi)) } \overline{\sigma}(y,\xi) \overline{\delta}_\ell(\xi) \sigma(x, \xi) \phi_{j \ell}(\xi) d \xi.
\]
Similar to the one-parameter Fourier integral operator, we can assume that the symbol function $\sigma(y, \xi)$ has a sufficiently small support in the $y$-space. Consequently, $S_\ell$ can be written as a finite sum of operators, each with a symbol function whose support is sufficiently small. Next, we observe that
\[
\nabla_{\xi_i} \left [\Phi_i(x_i, \xi_i) -\Phi_i(y_i, \xi_i) \right  ]= \nabla_{x_i}\nabla_{\xi_i} \Phi(x_i, \xi_i) \cdot (x_i -y_i)+ O(|x_i -y_i |^2 ), \quad 1\le i \le d.
\]
Since each $\Phi_i$ satisfies the non-degeneracy condition, we obtain the following estimate:
\begin{equation}\label{Phi_i est}
\left |\nabla_{\xi_i}  \Phi_i(x_i, \xi_i) - \nabla_{\xi_i} \Phi_i(y_i, \xi_i)  \right  | \ge C |x_i -y_i |,  \quad 1\le i \le d.
\end{equation}
Given the condition $\sigma \in \S^{-(n-d)/2}(\R^{\vn})$, the following differential inequality holds,
\[
\left | \p_\xi^\alpha   \left[ \overline{\sigma}(y,\xi) \overline{\delta}_\ell(\xi) \sigma(x, \xi) \phi_{j \ell}(\xi)  \right] \right | \le
 C_\alpha 2^{-j(n-d)} \prod_{i =1}^d  2^{ -( j-\ell_i)|\alpha_i|}.
\]
By performing integration by parts with respect to $\xi$, and applying the above differential inequality along with (\ref{Phi_i est}), we obtain
\[
\begin{array}{lc}\displaystyle 
\left |G_{j \ell }  (x,  y ) \right | \le   C_\alpha 2^{-j(n-d)} \prod_{i =1}^d \left  (2^{j-\ell_i} |x_i -y_i | \right )^{-|\alpha_i|} 2^{(j-\ell_i)n_i}
\\\\ \displaystyle
 \le   C_\alpha \prod_{i=1}^{d}
  2 ^{-\ell_i n_i\frac{n-d}{n}} \prod_{i =1}^d  \left  (2^{j-\ell_i} |x_i -y_i | \right )^{-|\alpha_i|} 2^{(j-\ell_i)n_i \left(1 - \frac{ n-d}{n}\right)}.
\end{array}
\]
We choose 
\begin{equation*}
    \begin{array}{cc}\displaystyle
|\alpha_i|=0 \quad \text{if} \quad |x_i|\leq 2^{-(j-\ell_i)}\quad  \quad |\alpha_i|=n_i  \quad \text{if} \quad  |x_i|>2^{-(j-\ell_i)}, \quad  1\le i \le d.
\end{array}
\end{equation*}
Thus, we can obtain
\begin{equation*}
  \begin{array}{lr}\displaystyle
\left|  G_{ \ell}(x, y) \right | \le C_\alpha
 \sum_{j\ge \ell_M  } \left\{   \prod_{i=1}^d  2^{-\ell_in_i (n-d)/n }  \left(2^{j-\ell_i}|x_i - y_i| \right)^{-|\alpha_i|}2^{(j-\ell_i)n_i (1-(n-d)/n)}\right\}
  \\\\ \displaystyle
\le   C_\alpha   \prod_{i=1}^d  2^{-\ell_in_i (n-d)/n } \left\{    \sum_{j\ge \ell_M  }  \left(2^{j-\ell_i}|x_i| \right)^{-|\alpha_i| }2^{(j-\ell_i)n_i (1-(n-d)/n)}\right\}
  \\\\ \displaystyle
 ~\leq C ~\prod_{i =1}^d  2^{-\ell_in_i (n-d)/n }  
\left \{\sum_{|x_i|\le 2^{-j+\ell_i}}2^{ (j-\ell_i) n_i(n-d)/n} 
+\left ({\frac{1}{|x_i|}} 
\right )^{n_i} \sum_{|x_i|>2^{-j+\ell_i}} 2^{ -(j-\ell_i) n_id/n} \right \}
\\\\ \displaystyle
\le C \prod_{i=1}^{d-1} 2^{-\ell_in_i (n-d)/n}
\left ({\frac{1}{|x_i|}}\right )^{n_i\left (1-{\frac{n-d}{n}}\right )},\quad \ell_d=0.
\end{array}  
\end{equation*}
By applying the Hardy-Littlewood-Sobolev inequality, we have
\[
\left\| \mathcal{T}_\ell \mathcal{T}^*_\ell f  \right\|_{L^r(\R^{n})} \le C \prod_{i =1}^{d-1} 2 ^{-\ell_i n_i\frac{n-d}{n}}    \left\|f  \right\|_{L^{p}(\R^n) }, \quad \frac{1}{r} =\frac{1}{p} -\frac{n-d}{n},
\]
and  $\frac{1}{p} = \frac{1}{2} + \frac{n-d}{2n}$ implies $r = q$, both (\ref{lemma reduction}) and (\ref{L^2 to L^q}) hold.

\section{{Seeger-Sogge-Stein decomposition for $\S^{m}(\R^{\vn})$}}\label{multi-parameter Seeger-Sogge-Stein decomposition}
 Similar to the one-parameter case, we can construct the Seeger-Sogge-Stein decomposition on each subspace $\mathbb{R}^{n_i}$. Specifically, for convenience of notation, we let
\[
j_i := j- \ell_i, \quad 1\le i \le d.
\]
Since $j \ge \ell_M$, it follows that $j_i \ge 0$. Consider a set of points $\{\xi_{j_i}^{\nu_i}\}_{\nu_i}$ uniformly distributed on the unit sphere $\mathbb{S}^{n_i-1}$, with a grid spacing of $2^{-j_i/2}$ multiplied by an appropriate constant. Then, for any given $\xi_i \in \mathbb{R}^{n_i}$, there exists a point $\xi_{j_i}^{\nu_i}$ such that 
\[
\left | \frac{\xi_i}{|\xi_i|}  -\xi^{\nu_i}_{j_i}\right |\le 2^{-\frac{j_i }{2}}.
\]
On the other hand, the number of elements in the set $\{ \xi_{j_i}^{\nu_i} \}_{\nu_i}$ is at most a constant multiple of $2^{j_i \frac{n_i-1}{2}}$. Referring to the definition of $\varphi$ in (\ref{varphi}), we define
\begin{equation*}
\varphi_{j_i}^{\nu_i} (\xi_i) =\varphi\left (  2^{j_i/2}  \left | \frac{\xi_i}{|\xi_i|}-\xi^{\nu_i}_{j_i}\right|  \right ),
\end{equation*}
Its support is contained within
\[
\Gamma_{j_i}^{\nu_i}=\left\{ \xi_i\in \mathbb{R}^{n_i}: \left| \frac{\xi_i}{|\xi_i|}-\xi^{\nu_i}_{j_i}\right |  \le 2\cdot 2^{-j_i/2}  \right \}.
\]
We also define
\begin{equation}\label{chi definition} 
\begin{array}{lc}\displaystyle
\chi_{j_i}^{\nu_i}(\xi_i)= \frac{\varphi_{j_i}^{\nu_i}(\xi_i)}{\sum_{\nu_i } \varphi_{j_i}^{\nu_i} (\xi_i)},  
\\\\  \displaystyle 
\chi_{j \ell }^\nu(\xi )= \prod_{i=1}^d  \chi_{j_i}^{\nu_i}(\xi_i), \quad  \nu =(\nu_1, \nu_2, \dots, \nu_d).
\end{array}
\end{equation}
Thus, we have a partition of unity,
\[
\sum_{\nu} \chi_{j \ell }^\nu(\xi ) =1, \quad \xi_i \neq 0, \quad 1 \le i \le d.
\]
Note that the sum above contains at most $C \prod_{i=1}^d 2^{-j_i(n_i-1)/2}$ terms.

By performing a rotation transformation, we can align the direction of $\xi_{i1}$ with $\xi_i^{\nu_i}$ in the space $\xi_i = (\xi_{i1}, \xi_i')$, where $\xi_i'$ is perpendicular to $\xi_{j_i}^{\nu_i}$.
Thus, for multi-index $\alpha_i$, we have
\begin{equation}\label{chi estimate 1}
|\partial_{\xi_{i1}}^{\alpha_i} \chi_{j_i}^{\nu_i}(\xi_i)|\le C_{\alpha_i}|\xi_i|^{-|\alpha_i|}, 
\end{equation}
\begin{equation}\label{chi estimate 2}
|\partial_{\xi'_{i}}^{\beta_i} \chi_{j_i}^{\nu_i} (\xi_i)|\le  C_{\beta_i}  2^{|\beta_i| j_i/2} |\xi_i|^{-|\beta_i|}.
\end{equation}
Now we can define the \emph{region of  influence } in the multi-parameter setting. For each $i$, we first define the rectangle $R_{j_i}^{\nu_i}$ as the set of all $x_i$ that satisfy the following conditions,
\[
\left | \left <\overline{y}_i-\nabla_{\xi_i} \Phi_i \left(x_i, \xi_{j _i}^{\nu_i} \right ), \xi_{j _i}^{\nu_i}  \right > \right |\le C 2^{-j_i},
\] 
 and 
\[
\left | \overline{y}_i-\nabla_{\xi_i} \Phi \left (x_i, \xi_{j_i}^{\nu_i} \right ) \right |\le C 2^{-j_i/2}.
\] 
Then, $R_{j_i}^{\nu_i}$ is approximately a rectangle where one side has length $C 2^{-j_i}$ and the remaining $n_i-1$ sides have length $C  2^{-j_i/2}$.

Since $\Phi_i$ satisfies the non-degeneracy condition, the mapping
\[
x_i \mapsto y_i =\nabla_{\xi_i} \Phi_i\left (x_i, \xi_{j_i}^{\nu_i} \right )
\]
For each $\xi_{j_i}^{\nu_i}$ with a non-zero Jacobian, we have
\[
\left |R_{j_i}^{\nu_i} \right |\le \mathfrak{C} 2^{-j_i}  \cdot 2^{-j_i(n_i-1)/2} .
\]


\subsection{Region of influence}
Now, using the atomic decomposition of $H^1(\mathbb{R}^n)$, suppose $a(x)$ is an atom in $H^1(\mathbb{R}^n)$ supported in a ball $B_r(\overline{y})$ with center $\overline{y}$ and radius $r$. We can assume $r < 1$, because when $r > 1$, 
\begin{equation*}
\left \|T a \right\|_{L^1(\R^n)}\le  \left \| T  a \right \|_{L^2(\R^n)}\le C   \left\| a \right \|_{L^2(\R^n)}\le  C  \left |B \right |^{-1/2}\le  C.
\end{equation*}
The first inequality holds because $\sigma(x, \xi)$ has compact support in $x$ and by using the Cauchy-Schwarz inequality. The second inequality holds because $T$ is bounded on $L^2(\mathbb{R}^n)$.

Let $k \ge 0$ be an integer such that the following inequality holds,
\[
2^{-k}\le r \le 2^{-k+1}.
\]
The \textbf{influence region} of a multi-parameter Fourier integral operator is defined as
\begin{equation}
Q= \bigotimes_{i =1}^d  Q_i , ~~~~~ Q_i=\bigcup_{j_i \ge  k } \bigcup_{\nu_i }R_{j_i}^{\nu_i}, \quad  1 \le  i \le d.
\end{equation}
Therefore, we have $Q \subset \mathbb{R}^{n}$, and elementary calculations show that
\begin{equation}
|Q|\le C \prod_{i=1}^d  \sum_{j_i \ge k } |R_{j_i}^{\nu_i} | \cdot 2^{j_i(n_i-1)/2} 
\le C \prod_{i=1}^d  \sum_{j_i \ge k } 2^{-j_i} \le C r^d.
\end{equation}
Now, by applying the Cauchy-Schwarz inequality and the estimate (\ref{L^p to L^2}) from Lemma \ref{L^2 estimates lemma}, we have
\begin{equation*}
\begin{array}{lc}\displaystyle
\int_{Q} \left  | T_{\ell} a (x) \right |  dx
\le  C |Q|^{\frac{1}{2}}
  \left  \{  \int_{\R^{n}} \left  |  T_{\ell} a(x) \right |^2 d x \right  \} ^{\frac{1}{2} }
\\\\ \displaystyle
\le C  |Q|^{\frac{1}{2}} \prod_{i =1}^{d-1} 2^{-\ell_i n_i \frac{n-d}{2n}}
  \left\| a\right\|_{L^{p} (\R^n)}
   \\\\ \displaystyle
\le C  r^{d/2}\prod_{i=1}^{d-1}  2^{-\ell_i n_i \frac{n-d}{2n}}
r^{(-1+1/p) n},\qquad \frac{1}{p} =\frac{1}{2} + \frac{n-d }{2n}
     \\\\ \displaystyle
\le C  \prod_{i=1}^{d-1} 2^{-\ell_i n_i \frac{n-d}{2n}}.
   \end{array}
\end{equation*}
Therefore, we have 
\begin{equation}\label{inside the region of influence}
\int_{Q} \left  | T_{\ell} a(x) \right |  dx
 \le C  \prod_{i=1}^{d-1} 2^{-\ell_i n_i \frac{n-d}{2n}}.
\end{equation}

 \section{Majorization of the kernels }\label{majorization on hardy space}

 Referring to the definition of $\chi_{j \ell }^\nu(\xi)$ in (\ref{chi definition}), we define the partial operator $T_{j \ell }^\nu$ as 
\begin{equation}\label{partial operator 2}
T_{j \ell }^\nu f(x)=\int_{\mathbb{R}^n} f(y)K^\nu_{j \ell}(x, y)dy,
\end{equation}
where
\begin{equation}\label{partial kernel 2}
K^\nu_{j \ell  }(x, y)=\int_{\mathbb{R}^n} e^{2\pi i  \left ( \Phi(x,\xi)-y\cdot \xi \right ) }\sigma(x,\xi) \phi_{j \ell }(\xi) \chi^\nu_{j \ell  }(\xi )d\xi.
\end{equation}
Our goal is to prove the following key estimate concerning the kernel.
\begin{lemma}\label{majorization}
Suppose  $\sigma(x,\xi) \in \S^{ -(n-d)/2}(\R^\vn ) $, then
\begin{equation}\label{estmate 1}
\int_{\mathbb{R}^n} \left | K_{j \ell }(x, y) \right |dx\le C   \prod_{i =1}^{d-1} 2^{-\ell_i(n_i-1)/2}.
\end{equation}
moreover, if  $y ,y'\in \R^n$, then 
\begin{equation}\label{estmate 2}
\int_{\mathbb{R}^n} \left |K_{j \ell }(x, y)-K_{j \ell }(x, y' ) \right  |dx\le C  2^{j} 
 |y- y'| \cdot   \prod_{i =1}^{d-1} 2^{-\ell_i(n_i-1)/2}, 
\end{equation}
and if   $y\in B_r(\overline{y} )$, $j>k+ \ell_M$,
\begin{equation}\label{estmate 3}
\int_{Q^c}    \left  |K_{j \ell }(x, y) \right |dx 
\le C  2^{-j+k+ \ell_M} \cdot   \prod_{i =1}^{d-1} 2^{-\ell_i(n_i-1)/2}.
\end{equation}

\end{lemma}


\begin{proof}
Now for each  $i $,  We have 
\begin{equation*}
\Phi_i(x_i,\xi_i)-y_i\cdot\xi_i=\left  [\nabla_{\xi_i}\Phi_i\left (x_i, \xi_{j_i}^{\nu_i}  \right)-y_i \right ]\cdot \xi_i 
+\Psi_i(x_i,\xi_i),
\end{equation*}
where 
\begin{equation*}
\Psi_i(x_i,\xi_i)=\Phi_i(x_i,\xi_i) -\nabla_{\xi_i}\Phi_i\left(x_i,  \xi_{j_i}^{\nu_i} \right )\cdot \xi_i,
\end{equation*}
Similar to the one-parameter Fourier integral operator, for any multi-indices $\alpha_i$ and $\beta_i$,
\begin{equation}\label{Psi estimate}
\left |\partial_{\xi_{i 1}}^{\alpha_i} \Psi_i(x_i,\xi_i) \right |\le C_{\alpha_i}\cdot 2^{-|\alpha_i|j_i},~~\left |\partial_{\xi'_{i}}^{\beta_i} \Psi_i(x_i,\xi_i) \right |\le C_{\beta_i}\cdot 2^{-|\beta_i|  j_i/2}.
\end{equation}
We rewrite the kernel as
\begin{equation*}
K^\nu_{j \ell   }(x, y)=\int_{\mathbb{R}^n} 
\prod_{i =1}^d  e^{2 \pi i \left   [\nabla_{\xi_i}\Phi_i \left (x_i, \xi_{j _i}^{\nu_i} \right )-y_i \right ]\cdot \xi_i  }
\Theta^\nu_{j \ell}(x,\xi) d\xi.
\end{equation*}
where
\begin{equation}\label{multi Theta}
\Theta^\nu_{j \ell  }(x,\xi)=\prod_{i =1}^d   e^{2\pi i \Psi_i(x_i,\xi_i) } \sigma(x,\xi) \phi_{j \ell }(\xi)  \chi^\nu_{j \ell  }(\xi).
\end{equation}
Next, we introduce the differential operator
\[
L=\prod_{i =1}^d   (I-2^{j_i}\nabla_{\xi_{i1}} \cdot 2^{j_i}\nabla_{\xi_{i1}}  )(I-2^{j_i/2}\nabla_{\xi'_i} \cdot 2^{j_i/2}\nabla_{\xi'_i}  ).
\]
Since we already have estimates from (\ref{chi definition}) to (\ref{chi estimate 2}) and (\ref{Psi estimate}) to (\ref{multi Theta}), and given the assumption $\sigma(x, \xi) \in \S^{-(n-d)/2}(\R^{\vn})$, we have
\[
\left |L^N\Theta_{j \ell }^\nu (x,\xi) \right |\le C_N   2^{-j(n-d)/2}, ~~N\ge 0.
\]
On the other hand, the measure of the support of $\Theta_{j \ell }^\nu (x,\xi)$ in the variable $\xi$ is
\[
\left |\mathbf{supp} ~\Theta_{j \ell }^\nu (x,\xi) \right |\le C \prod_{i =1}^d  2^{j_i} 2^{ j_i(n_i-1)/2}.
\]
Therefore, using integration by parts, we obtain:
\begin{equation}\label{Omega estimate 1}
\begin{array}{lc}\displaystyle
\left |K_{j \ell }^\nu (x, y) \right |\le
C \prod_{i =1}^d  2^{j_i} 2^{ j_i(n_i-1)/2}
2^{-j(n-d)/2}
 \\\\ \displaystyle
 \times 
 \left \{1+2^{j_i}|(\nabla_{\xi_i}\Phi_i\left (x_i,\xi^{\nu_i}_{j_i} \right )-y_i )_1| \right \}^{-2N}
\left \{1+2^{ j_i/2}|(\nabla_{\xi_i}\Phi_i \left (x_i,\xi^{\nu_i}_{j_i} \right )-y_i )'| \right \}^{-2N}.
\end{array}
\end{equation}
Here, $(\cdot)_1$ denotes the component along the direction $\xi_{j_i}^{\nu_i}$, and $(\cdot)'$ denotes the component perpendicular to it. To compute $\int_{\mathbb{R}^n} |K_{j \ell }^\nu (x,y)| dx$, we have
\[
\begin{array}{lc}\displaystyle
\int_{\mathbb{R}^n}
\left  |K_{j \ell }^\nu(x, y) \right | dx
  \le
C \prod_{i =1}^d  2^{ j_i} 2^{j_i(n_i-1)/2}
2^{-j(n-d)/2}
 \\\\ \displaystyle
 \times 
\int_{\mathbb{R}^{n_i}} \left \{1+2^{j_i}|(\nabla_{\xi_i}\Phi_i \left (x_i,\xi^{\nu_i}_{j_i} \right )-y_i )_1| \right \}^{-2N}
\left \{1+2^{ j_i/2}|(\nabla_{\xi_i}\Phi_i \left (x_i,\xi^{\nu_i}_{j_i} \right )-y_i )'| \right \}^{-2N}dx_i,
\end{array}
\]
We perform a variable substitution
\[
x_i \mapsto \nabla_{\xi_i}\Phi_i \left (x_i,\xi_{j_i}^{\nu_i} \right ),
\]
The Jacobian of this transformation is non-zero because each $\Phi_i\left(x_i, \xi^{\nu_i}_{j_i}\right)$ satisfies the non-degeneracy condition. Therefore, we obtain
\begin{equation}\label{Omega estimate}
\begin{array}{lc} \displaystyle
\int_{\mathbb{R}^n}|K_{j \ell }^\nu(x,y)|dx\le  
C  \prod_{i =1}^d  2^{ j_i} 2^{ j_i(n_i-1)/2}
2^{-j(n-d)/2}
\\\\ \displaystyle
\times
\int_{\mathbb{R}^{n_i}} \left \{1+2^{j_i}|(x_i-y_i )_1| \right \}^{-2N}
\left \{1+2^{j_i/2}|(x_i -y_i )'| \right \}^{-2N}dx_i
 \\\\ \displaystyle
 \le  C  2^{-j(n-d)/2},
\end{array}
\end{equation}
The final inequality holds if we choose $N$ such that $2N > n$.

Observing that the factor introduced by differentiation in the $y$ direction is controlled by $C 2^{j}$, similar estimates hold for $\nabla_{y} K_{j \ell }^\nu (x,y)$. The result is
\[
\int_{\mathbb{R}^n} \left |\nabla_{y} K_{j \ell }^\nu (x, y) \right |dx\le \mathfrak{C} 2^{j} 2^{-j(n-d)/2},
\]
Thus 
\begin{equation}\label{Omega estimate 2}
\int_{\mathbb{R}^n} \left  |K_{j \ell }^\nu (x, y)- K_{j \ell }^\nu (x, y') \right |dx\le \mathfrak{C} 2^{j} |y-y'|
2^{-j(n-d)/2}.
\end{equation}
Now, let us estimate
\[
\int_{Q^c}    \left |K_{j \ell }^\nu (x, y) \right |dx .
\]
Since $k$ is an integer satisfying $2^{-k} \le r \le 2^{-k+1}$, for each $i$, there exists a unit vector $\xi_{k}^{\mu_i}$ such that
\[
|\xi^{\nu_i}_{j_i} -\xi^{\mu_i}_{k}|\le 2^{-k/2}.
\]
Let $x \in Q^c$. Then there exists $1 \le i \le d$ such that $x_i \in Q_i^c$. Since $Q_i = \bigcup_{j_i \ge k} \bigcup_{\nu_i} R_{j_i}^{\nu_i}$, we have
\[
x_i \in  Q_i ^c  \subset \left ( R_{{k}}^{\mu_i} \right)^c.
\]
According to the definition of $R_{k}^{\mu_i}$,
\[
2^{k} \left | \left <\overline{y}_i-\nabla_{\xi_i} \Phi_i \left (x_i, \xi_k^{\mu_i} \right ), \xi_k^{\mu_i} \right > \right |\ge C, 
\] 
or 
\[
2^{k/2} \left |  \overline{y}_i-\nabla_{\xi_i} \Phi_i\left (x_i, \xi_k^{\mu_i} \right ) \right |\ge C.
\] 
When $y \in B_r(\overline{y})$ and $|y - \overline{y}| \le 2^{-k+1}$, since we can assume $C$ is sufficiently large, similarly to the single-parameter case, when $j_i \ge k$, we have
\[
2^{j_i} \left | \left <y_i-\nabla_{\xi_i} \Phi_i \left (x_i, \xi_{j_i}^{\nu_i} \right ),  \xi_{j_i}^{\nu_i }\right > \right |\ge C 2^{j_i-k},
\] 
or 
\[
2^{j_i/2} \left | \left ( y_i-\nabla_{\xi_i} \Phi_i \left (x_i, \xi_{j_i}^{\nu_i}  \right ) \right  )' \right |\ge C 2^{(j_i-k)/2}.
\] 
Substituting this bound into the inequality (\ref{Omega estimate 1}) and arguing as before, we obtain
\[
\begin{array}{lc}\displaystyle
\int_{Q^c}    \left  |K_{j \ell }^\nu (x, y) \right |dx   \le C 
2^{-j_i+k} \prod_{i =1}^d  2^{j_i}  2^{ -j_i(n_i-1)/2 } 2^{-j(n-d)/2}
 \\\\ \displaystyle
 \times 
\int_{\mathbb{R}^{n_i}} \left \{1+2^{j_i}|(x_i-y_i )_1 | \right \}^{1-2N}
\left \{1+2^{j_i/2}|(x_i-y_i )'| \right \}^{2-2N} dx_i.
\end{array}
\]
Similarly,
\begin{equation}\label{Omega estimate 3}
\int_{Q^c}      \left |K_{j \ell}^\nu (x, y) \right |dx   \le C 
2^{-j_i+k}  2^{-j(n-d)/2}
\end{equation}
Finally, summing over $\nu$ in the inequalities (\ref{Omega estimate}) to (\ref{Omega estimate 3}), and considering that the number of terms involved is at most a constant multiple of $\prod_{i=1}^d 2^{j_i(n_i-1)/2}$, this completes the proof of the required estimate.

 \end{proof}

\section{Proof of Theorem  \ref{classical Hardy space}  }

Now, let us prove Theorem \ref{classical Hardy space}. We only need to show that
\begin{equation}\label{final aim 1}
\begin{array}{lc}\displaystyle
\int_{\mathbb{R}^n} \left | T_{\ell} a(x) \right |dx\le  C  \prod_{i=1}^{d-1} 2^{- \epsilon \ell_i }, \quad \epsilon>0.
\end{array}
\end{equation}

\begin{proof}
{\bf Case 1:  $\ell_M > \frac{4}{3} k $}. 
We write the summation as
\[
\sum_{j\ge \ell_M}=\sum_{\ell_M \le j<  2 \ell_M }+\sum_{j  \ge 2\ell_M}.
\]
First, consider the case where $j \ge 2 \ell_M > \ell_i + k$. Referring to the definition of $T_{j \ell }$ in (\ref{partial operator jell}), and applying Lemma \ref{majorization}, we have 
\begin{equation*}
\begin{array}{lc}\displaystyle
\sum_{j \ge 2 \ell_M} 
 \int_{Q^c}
 \left |  T_{j \ell} a(x)  \right |dx 
\le C \sum_{j \ge 2 \ell_M}
   \int_{\mathbb{R}^n}    \left\{
 \int_{ Q^c}     \left  |K_{j \ell} (x, y) \right |dx \right \} \left  |a (y) \right  |dy
\\\\ \displaystyle
\le C \sum_{j \ge 2 \ell_M}
2^{-j+\ell_M+k}
 \prod_{i=1}^{d-1 } 2^{-\ell_i(n_i-1)/2}
 \int_{\mathbb{R}^n} |a(y) |dy
 \\\\ \displaystyle
 \le  C 
   \prod_{i =1}^{d-1} 2^{-\ell_i(n_i-1)/2}.
\end{array}
\end{equation*}
Next, consider the case where $\ell_M \le j < 2 \ell_M$. Applying Lemma \ref{majorization}, there exists an arbitrarily small $c > 0$ such that
\begin{equation*}
\begin{array}{lc}\displaystyle
\sum_{\ell_M \le j< 2 \ell_M} 
\int_{\mathbb{R}^{n} }       \left  |T_{j \ell } a(x) \right |dx 
\le  C \sum_{\ell_M \le  j< 2 \ell_M} \int_{\mathbb{R}^n} 
\left\{ \int_{\mathbb{R}^{n}}  \left |K_{j \ell }(x, y) \right  | dx   \right\} \left |a(y) \right  |dy
\\\\ \displaystyle
\le C  \ell_M    \prod_{i =1}^{d-1}  2^{-\ell_i(n_i-1)/2}
 \int_{\mathbb{R}^n} |a(y) |dy \le C    \prod_{i =1}^{d-1} 2^{-\ell_i  \left  ((n_i-1)/2-c\right  )}.
\end{array}
\end{equation*}
Combining the above two estimates and since we assume $n_i \ge 2$ for each $i$, we obtain
\begin{equation}\label{nontrivial part}
\int_{Q^c}
     \left | T_\ell a(x) \right |dx 
   \le C  \prod_{i =1}^{d-1}  2^{- \epsilon\ell_i}, \quad \epsilon>0.
\end{equation}
Combining inequality (\ref{nontrivial part}) with the previous estimate (\ref{inside the region of influence}), we obtain (\ref{final aim 1}).

{\bf Case 2: $\ell_M \le \frac{4}{3} k $}. 
At this point, with $k - \frac{3}{4} \ell_M \ge 0$, we define the influence region as
\begin{equation}
Q_\ell= \bigotimes_{i =1}^d  Q_{\ell_i} , ~~~~~ Q_{\ell_i}=\bigcup_{j_i\ge  k-  \frac{3 \ell_i}{4}  } \bigcup_{\nu_i }R_{j_i}^{\nu_i}, \quad 1\le i \le d.
\end{equation}
Therefore, we have
\begin{equation*}
\left |Q_\ell  \right |\le \mathfrak{C} \prod_{i =1}^d   \sum_{j_i \ge  k-  \frac{3\ell_i }{4}  }  \left |R_{j_i }^{\nu_i} \right | \cdot  2^{j_i(n_i-1)/2} \le  \C r^{ d} \prod_{i=1}^{d}  2^{\frac{3}{4} \ell_i}.
\end{equation*}
Now, applying the Cauchy-Schwarz inequality and using (\ref{L^p to L^2}) from Lemma \ref{L^p to L^2 lemma}, we have
\begin{equation*}
\begin{array}{lc}\displaystyle
\int_{Q_\ell} \left  | T_{\ell} a (x) \right |  dx
\le C  \left |Q_\ell \right |^{\frac{1}{2}}
  \left  \{ 
 \int_{\R^{n}}
\left  | T_{\ell} a(x) \right |^2 d x  \right  \} ^{\frac{1}{2} }
\le C  \prod_{i=1}^{d} 2^{-\ell_i n_i \frac{n-d}{2n}} 2^{\frac{3}{8}  \ell_i}.
   \end{array}
\end{equation*}
Since $n_i \ge 2$ and $n \ge 2d$, we have $n_i \frac{n-d}{2n} \ge \frac{2n-2d}{2n} \ge \frac{1}{2}$. Thus, we also have
\begin{equation}\label{inside the region of influence 2}
\begin{array}{lc}\displaystyle
\int_{Q_\ell} \left  | T_{\ell} a (x) \right |  dx
\le C \prod_{i=1}^{d-1}  2^{- \frac{1}{8} \ell_i  }.
   \end{array}
\end{equation}
Similarly, Lemma \ref{majorization} holds, with the only difference being that inequality (\ref{estmate 3}) is replaced by

\begin{lemma}\label{key est} 
Assume the symbol function $\sigma(x, \xi) \in \S^{-(n-d)/2}(\R^\vn)$. When $y \in B_r(\overline{y})$ and $j > k + \frac{\ell_M}{4}$,
\begin{equation*}
\int_{ Q_\ell^c}   \left   |K_{j \ell }(x, y) \right |dx 
\le C  2^{-j+k+ \frac{\ell_M}{4}}
 \prod_{i =1}^{d-1} 2^{-\ell_i(n_i-1)/2}.
\end{equation*}
\end{lemma}
Assuming Lemma \ref{key est} holds, we now write the summation as:
\[
\sum_{j \ge \ell_M} = \sum_{\ell_M \le  j  <   k+ \frac{1}{4} \ell_M }  +   \sum_{  j   \ge    k+ \frac{1}{4}  \ell_M  } .
\]
First, consider the case where $j \ge k + \frac{1}{4} \ell_M$. According to Lemma \ref{key est},
\begin{equation*}\label{outside the region of influence 2}
\begin{array}{lc}\displaystyle
\sum_{j  \ge k + \frac{1}{4} \ell_M   } 
 \int_{Q_\ell^c}     \left   |T_{j \ell } a(x) \right  |dx 
 \\\\ \displaystyle
\le C  \sum_{j  \ge  k+ \frac{1}{4}\ell_M}
  \int_{\mathbb{R}^n}
  \left\{  \int_{Q_\ell^c}      |K_{j \ell } (x, y)|dx  \right \} |a(y) |dy
\\\\ \displaystyle
 \le C 
\sum_{j  \ge  k+ \frac{1}{4}\ell_M}  2^{-j+k+ \frac{\ell_M}{4} }
  \prod_{i =1}^{d-1} 2^{-\ell_i(n_i-1)/2}
 \int_{\mathbb{R}^n} |a(y) |dy
 \\\\  \displaystyle
  \le C
  \prod_{i =1}^{d-1} 2^{-\ell_i(n_i-1)/2}.
\end{array}
\end{equation*}
Next, consider the case where $\ell_M \le j < k + \frac{1}{4} \ell_M$. Note that $\int a(y) dy = 0$, thus,
\begin{equation*}\label{}
\begin{array}{lc}\displaystyle
\sum_{ \ell_M \le   j  <  k+ \frac{1}{4} \ell_M  } 
 \int_{\R^n}    \left |T_{j \ell } a(x) \right |dx 
 \\\\ \displaystyle
 =\sum_{ \ell_M  \le  j <  k+ \frac{1}{4} \ell_M  } 
  \int_{ \R^n}  \left |
 \int_{ \R^n}   \left    (K_{j \ell }(x, y) - K_{j \ell}(x, \overline{y }) \right  ) a(y)dy  
  \right |dx
\\\\ \displaystyle
\le C
\sum_{ \ell_M  \le  j <  k+ \frac{1}{4} \ell_M  } 
  \int_{ \R^n}  \left\{ 
 \int_{ \R^n}   \left    |K_{j \ell }(x, y) - K_{j \ell}(x, \overline{y }) \right  |dx \right \}
  \left  |a(y)  \right |dy
\\\\ \displaystyle
\le C 
\sum_{ \ell_M  \le  j <  k+ \frac{1}{4} \ell_M  } 
 2^{j-k}  
 \cdot   \prod_{i =1}^{d-1} 2^{-\ell_i(n_i-1)/2}   \int_{ \R^n}   \left  |a(y)  \right |dy
 \\\\ \displaystyle
\le C
2^{ \frac{\ell_M }{4}}
 \prod_{i =1}^{d-1} 2^{-\ell_i(n_i-1)/2}.
\end{array}
\end{equation*}
Since $n_i \ge 2$, similarly, 
 \begin{equation}\label{nontrivial part 2}
\int_{Q_\ell^c}
     \left | T_\ell a(x) \right |dx 
   \le C \prod_{i =1}^{d-1}  2^{- \epsilon\ell_i}, \quad \epsilon>0.
\end{equation} 
By combining the estimates (\ref{inside the region of influence 2}) and (\ref{nontrivial part 2}), we have proven (\ref{final aim 1}) and Theorem \ref{classical Hardy space}.
\end{proof}

Let's now prove Lemma \ref{key est}. For convenience of notation, let us define
 \[
 k_i := k- \frac{3}{4} \ell_i \ge 0, \quad 1 \le i \le d. 
 \]
For each $i$, there exists a unit vector $\xi_{k_i}^{\mu_i}$ such that
\[
|\xi^{\nu_i}_{j_i} -\xi^{\mu_i}_{k_i}|\le 2^{-k_i/2}.
\]
Let $x \in Q_\ell^c$. Then there exists $1 \le i \le d$ such that $x_i \in Q_{\ell_i}^c$. Since $Q_{\ell_i} = \bigcup_{j_i \ge k_i} \bigcup_{\nu_i} R_{j_i}^{\nu_i}$, we have
\[
x_i \in  Q_{\ell_i} ^c  \subset \left ( R_{{k_i }}^{\mu_i} \right)^c.
\]
According to the definition of $R_{k_i}^{\mu_i}$,
\[
2^{k_i} \left | \left <\overline{y}_i-\nabla_{\xi_i} \Phi_i \left (x_i, \xi_{k_i}^{\mu_i} \right ),   \xi_{k_i}^{\mu_i}\right > \right |\ge C, 
\] 
or 
\[
2^{k_i/2} \left |  \overline{y}_i-\nabla_{\xi_i} \Phi_i\left (x_i, \xi_{k_i}^{\mu_i} \right ) \right |\ge C.
\] 
When $y \in B_r(\overline{y})$ and $|y - \overline{y}| \le 2^{-k+1}$, we have $2^{k_i} \cdot 2^{-k} \le C 2^{-3/4 \ell_i} \le C$.

Since we can assume $\C$ is sufficiently large, similar to the one-parameter case, when $j_i \ge k_i$, we obtain
\[
2^{j_i} \left | \left <y_i-\nabla_{\xi_i} \Phi_i \left (x_i, \xi_{j_i}^{\nu_i} \right ),  \xi_{j_i}^{\nu_i }\right > \right |\ge C 2^{j_i-k_i},
\] 
or
\[
2^{j_i/2} \left | \left ( y_i-\nabla_{\xi_i} \Phi_i \left (x_i, \xi_{j_i}^{\nu_i}  \right ) \right  )' \right |\ge C 2^{(j_i-k_i)/2}.
\] 
Substituting this bound into inequality (\ref{Omega estimate 1}) and arguing as before, we obtain
\[
\begin{array}{lc}\displaystyle
\int_{Q_\ell^c}    \left  |K_{j \ell }^\nu (x, y) \right |dx   \le C 
2^{-j_i+k_i } \prod_{i =1}^d  2^{j_i}  2^{ -j_i(n_i-1)/2 } 2^{-j(n-d)/2}
 \\\\ \displaystyle
 \times 
\int_{\mathbb{R}^{n_i}} \left \{1+2^{j_i}|(x_i-y_i )_1 | \right \}^{1-2N}
\left \{1+2^{j_i/2}|(x_i-y_i )'| \right \}^{2-2N} dx_i.
\end{array}
\]
Similarly, we obtain
\begin{equation*}
\int_{Q_\ell^c}      \left |K_{j \ell}^\nu (x, y) \right |dx   \le C 
2^{-j_i+k_i}  2^{-j(n-d)/2}.
\end{equation*}
The number of $\nu$ is at most $\prod_{i=1}^d 2^{j_i(n_i-1)/2}$, thus proving Lemma \ref{key est}.

\subsection{  The sharpness of theorem\ref{classical Hardy space}}

In this section, we will prove that Theorem \ref{classical Hardy space} is optimal. Specifically, we will show that there exists a Fourier integral operator $T$ of order $m$ such that when $-(n-d)/2 < m \le 0$ and $m > -(n-d) |1/p -1/2|$, $T$ is not bounded on $L^p(\R^n)$. Let us define
\[
\begin{array}{lc}\displaystyle
\Phi_i(x_i, \xi_i) = x_i \cdot \xi_i  + |\xi_i|, \quad i=1,2,\dots, d.
\\\\ \displaystyle
a(x) \in \mathcal{C}^\infty_0(\R^n), \quad a(x)\neq 0, \quad |x_i|=1, \quad 1,2,\dots,d.
\\\\ \displaystyle
 \gamma_{\alpha_i}(\xi_i)\in \mathcal{C}^\infty(\R^{n_i}),  ~
\gamma_{\alpha_i}(\xi_i) =    |\xi_i|^{-\alpha_i}, ~ \text{if} ~ |\xi_i|  \ge 1, \quad i=1,2, \dots, d.
\\\\\ \displaystyle 
\gamma_{-m}(\xi) = \prod_{i=1}^d \gamma_{-m_i}(\xi_i), \quad   m =\sum_{i=1}^d m_i, \quad -(n_i-1)< m_i \le 0, ~~i=1,2\dots, d. 
 \end{array}
\]
$\Psi(\xi)$ is a smooth homogeneous function of degree 0. When $\xi$ is large, the support of $\Psi(\xi)$ is contained within a truncated cone.
\[
\{\xi \in \R^n: |\xi_i|/2 \le |\xi| \le 2|\xi_i|, ~~|\xi|\ge 1,~~i=1,2, \dots,d \},
\]
Moreover, $\Psi(\xi)$ equals $1$ in a slightly smaller open subcone. Consequently, $a(x) \gamma_m(\xi) \Psi(\xi) \in \S^m(\R^\vn)$
and if we consider the operator 
 \[
T_m f_\alpha (x) = \int_{\R^n} e^{2\pi i(x\cdot \xi + \sum_{i=1}^d |\xi_i|)}  a(x) \gamma_{-m}(\xi) \Psi(\xi) \prod_{i=1}^d \widehat{f_{\alpha_i} }(\xi_i)d\xi, \quad \widehat{f_{\alpha_i} }(\xi_i) = \gamma_{\alpha_i}(\xi_i), 
\]
Then essentially 
\[
T_mf(x) = \prod_{i=1}^d T_{m_i} f_{\alpha_i}(x_i).
\]
Here,  $T_{m_i} f_{\alpha_i} $  denotes the one-parameter Fourier integral operator defined. When
\[
m > -(n-d) |1/p -1/2|,
\]
there exists $k$ such that 
  \[
  m_k > -(n_k-1) |1/p -1/2|,
  \]
then, according to the single-parameter case, take
  \[
  (\alpha_k-m_k-n_k/2-1/2)p=-1,
  \]
Thus, $T_m f_\alpha$ does not belong to $L^p(\R^n)$, but $f_{\alpha_k} \in L^p(\R^n)$ when $i \neq k$. Taking $\alpha_i < (n_i / p + n_i)$, we then have $f_\alpha \in L^p(\R^n)$.

 \subsection{The proof of corollary \ref{Lip} }

We denote the Littlewood-Paley operator $P_j$ as
\[
\widehat{P_j f} (\xi) = \phi_j(\xi) \widehat{f}(\xi).
\]
If $T_\sigma$ is a Fourier multiplier operator that satisfies
\[
\widehat{T_\sigma f}(\xi) = \sigma(\xi) \widehat{f}(\xi).
\]
Because the supports of $\phi_j(\xi)$ and $\phi_k(\xi)$ are disjoint when $|k - j| \geq 2$, 
\[
  \widehat{P_kT_\sigma P_j f}(\xi) = \phi_k(\xi) \sigma(\xi) \phi_j(\xi) \widehat{f}(\xi)=0.
\]
For multi-parameter Fourier integral operators, similarly, we have the following lemma: 
\begin{lemma}\label{orthogonal}
Let $T$ be the multi-parameter Fourier integral operator defined in Corollary \ref{Lip}, and let $f \in L^\infty(\R^n)$. Then
\[
P_k T^* P_j f(x) =0, \quad P_k T P_j f(x) =0,   \quad \text{if}  \quad |k-j| \ge 2.
\]

\end{lemma}

\begin{proof}
By definition 
\[
T^*P_jf(x) = T^*_j f(x) = \int_{\R^n} f(y) K^*_j(x, y) dy, \quad K^*_j(x, y) = \int_{\R^n} e^{ 2\pi i \left ( x\cdot \xi - \Phi(y, \xi)  \right)} 
\overline{\sigma}(y, \xi) \phi_j(\xi) d\xi. 
\]
Taking the Fourier transform of $T^*P_j f(x)$ gives
\[
\widehat{T^*P_j f} (\eta) = \int_{\R^n} f(y)  \widehat{K^*_j}(\eta, y) dy,\quad  \widehat{K^*_j}(\eta, y)= e^{- 2\pi i   \Phi(y, \xi)   } \overline{\sigma}(y, \xi) \phi_j(\xi)  
\]
Therefore, 
\begin{equation}\label{T^* definition}
\begin{array}{lc} \displaystyle
\widehat{P_k  T^* P_j f}(\eta) = \phi_k(\eta) \widehat{T^*P_j f} (\eta) = \int_{\R^n} f(y)  \phi_k(\eta)   \widehat{K^*_j}(\eta, y) dy
\\\\ \displaystyle
=  \int_{\R^n} f(y)    e^{ -2\pi i   \Phi(y, \eta)   } \overline{\sigma}(y, \eta) \phi_j(\eta)  \phi_k(\eta)   dy 
\end{array}
\end{equation}
When $|k - j| \geq 2$, the supports of $\phi_j(\eta)$ and $\phi_k(\eta)$ may be disjoint. Note that $\overline{\sigma}(y, \eta)$ has a compact support in $y$. Therefore, when $f \in \mathcal{S}(\R^n)$ or $f \in L^\infty(\R^n)$,
\[
|\widehat{P_k  T^* P_j f}(\eta)  |
\le  C  \left |  \phi_j(\eta)  \phi_k(\eta)  \right | \left \| f \right \|_{L^\infty(\R^n)}=0.
\]
Consider the adjoint operator $P_k T^* P_j$, and note that the Schwartz space $\mathcal{S}(\R^n)$ is dense in $L^1(\R^n)$, and the dual space of $L^1(\R^n)$ is $L^\infty(\R^n)$. In this way, we have proven Lemma \ref{orthogonal}.

\end{proof}

With Lemma \ref{orthogonal} established, we now proceed to prove Corollary \ref{Lip}.

\begin{proof}

First, according to the proof of Theorem \ref{classical Hardy space}, we know that
\[
\left \| T P_j f \right \|_{L^1(\R^n)}  =\left \| T_j f \right \|_{L^1(\R^n)} \le  C  \left\| f \right \|_{L^1(\R^n)}.
\]
Therefore, by duality,
\[
\left \|P_j T^*  f \right \|_{L^\infty (\R^n)} 
\le C   \left\| f \right \|_{L^\infty(\R^n)}, \quad j \ge 0.
\]
Using Lemma \ref{orthogonal}, when $j \geq 0$,
\[
\left \|P_j  T^*  f \right \|_{L^\infty (\R^n)} 
\le C \sum_{|j-k| < 2} \left \|P_j T^*  P_k f \right \|_{L^\infty (\R^n)} 
\le C \left \| P_j  T^* P_j  f  \right \|_{L^\infty (\R^n)} 
\le C   \left\| P_j f \right \|_{L^\infty (\R^n)}.
\]
Thus, 
 \[
\left \| T^* f \right\|_{Lip(\alpha)} \le C  \left \| f \right\|_{Lip(\alpha)} .
\]
Then, by duality, we have proven the desired conclusion.

\end{proof}

 \subsection{The proof of corollary \ref{h^1}}

\begin{proof}
Recalling the local Riesz transform, we have
\[
\widehat{R_j f}(\xi) = m(\xi) \widehat{f}(\xi), \quad m\in \S^0(\R^n),\quad  j=1,2,\dots,  n.
\]
Basic calculations yield
\[
\widehat{R_j T^* f}(\xi) = \int_{\R^n} f(y) \left \{  e^{-2 \pi i \Phi(y, \xi)} m(\xi) \overline{\sigma}(y, \xi ) \right \} dy
\]
If we let
\[
T_mf(x) = \int_{\R^n} e^{2 \pi i x \cdot \xi} m(\xi) \sigma(x, \xi) \widehat{f}(\xi) d \xi,  
\]
Then $R_j T^* f = T_m^* f$, because $m(\xi) \sigma(x, \xi) \in \S^{-(n-d)/2}(\R^n)$. According to Theorem \ref{classical Hardy space}, we have
\[
\left\| R_jT^*a   \right \|_{L^1(\R^n)}  = \left\| T_m^*a   \right \|_{L^1(\R^n)}  \le C.
\] 
Here, $a$ is an atom in $H^1(\R^n)$ and satisfies that the radius of the ball $B$ associated with $a$ is less than $1$. According to the atomic decomposition of $h^1(\R^n)$, we have
\[
\left\| R_jT^*f   \right \|_{L^1(\R^n)}  \le C  \left\| f   \right \|_{h^1(\R^n)} .
\] 
Thus, we have proved that $\left \| T^* f \right\|_{h^1(\R^n)} \le \left\| f \right\|_{h^1(\R^n)}$.

Note that when $f \in L^1(\R^n)$, $P_k f \in H^1(\R^n)$ for $k > 0$. Therefore,
\[
\begin{array}{lc} \displaystyle
\left\| R_j Tf  \right\|_{L^1(\R^n)} \le \sum_{k\ge 0}  C \left\| R_j  P_kT P_k f  \right\|_{L^1(\R^n)} 
\\\\ \displaystyle 
\le \sum_{k> 0}   C \left\|  P_kT P_k f  \right\|_{H^1(\R^n)} +   C \left\| R_j  S_0T S_0f  \right\|_{L^1(\R^n)} 
\\\\ \displaystyle
\le \sum_{k> 0}   C \left\|  T P_k f  \right\|_{L^1(\R^n)} + C \le C'.
\end{array}
\]
Thus, we have proven Corollary \ref{h^1}.
\end{proof}

 \subsection{The proof of corollary \ref{Sobolev} }

\begin{proof}
Recall that $P_j$ and $\mathfrak{F}_s$ are defined as
\[
\widehat{P_j f} (\xi) = \phi_j(\xi) \widehat{f}(\xi), \quad \widehat{ \mathfrak{F}_s f}(\xi) =(1+|\xi|^2)^{s/2} \widehat{f}(\xi).
\]
From the proof of Lemma \ref{orthogonal}, we know that when $f \in \mathcal{S}(\R^n)$
\[
P_k T P_j f(x) =0, \quad \text{if}  \quad |k-j| \ge 2.
\]
Let $a(x)$ be an atom in $H^1(\R^n)$. When $\sigma \in \S^{-(n-d)/2}(\R^n)$, applying the above fact, we can calculate
\[
\begin{array}{lc} \displaystyle
\left\| \mathfrak{F}_s T  \mathfrak{F}_{-s} a \right\|_{L^1(\R^n)} =\left\| \sum_{j, k \ge 0} \mathfrak{F}_s P_k T P_j  \mathfrak{F}_{-s} a  \right\|_{L^1(\R^n)} 
\\\\ \displaystyle
\le  \sum_{|k-j| \le 2 }  \left\| \mathfrak{F}_s P_k T P_j  \mathfrak{F}_{-s} a  \right\|_{L^1(\R^n)} \le C   \sum_{ j\ge0}  \left\|   P_j T P_j  a  \right\|_{L^1(\R^n)} 
\\\\ \displaystyle
 \le C   \sum_{ j\ge0}  \left\|    T P_j  a  \right\|_{L^1(\R^n)}  \le C   \sum_{ j\ge0}  \left\|    T_j  a  \right\|_{L^1(\R^n)} \le  C.
\end{array}
\]
The final inequality holds because Theorem \ref{classical Hardy space} is valid. Thus, we have proven
\begin{equation}\label{Hardy estimate}
\begin{array}{lc} \displaystyle
\left\| \mathfrak{F}_s T  \mathfrak{F}_{-s} f \right\|_{L^1(\R^n)}  
  \le C   \left\|   f  \right\|_{H^1(\R^n)}, \quad \sigma \in \S^{-(n-d)/2}(\R^\vn).
\end{array}
\end{equation}
When $\sigma \in \S^0(\R^n)$, using the Plancherel theorem and the fact that $T$ is bounded on $L^2(\R^n)$, we obtain
\[
\begin{array}{lc} \displaystyle
\left\| \mathfrak{F}_s T  \mathfrak{F}_{-s} f  \right\|_{L^2(\R^n)}  
\le  \sum_{|k-j| \le 2 }  \left\| \mathfrak{F}_s P_k T P_j  \mathfrak{F}_{-s} f   \right\|_{L^2(\R^n)} \le C   \sum_{ j\ge0}  \left\|   P_j T P_j  f \right\|_{L^2(\R^n)} 
\\\\ \displaystyle
\le C   \sum_{ j\ge0}  \left\|  \widehat{ P_j T P_j  f} \right\|_{L^2(\R^n)} 
 \le C   \sum_{ j\ge0}  \left\|  \widehat{  T P_j  f } \right\|_{L^2(\R^n)}  \le C   \sum_{ j\ge0}  \left\|   \widehat{  P_j  f}  \right\|_{L^2(\R^n)} \le  C \left\|    f \right\|_{L^2(\R^n)} .
\end{array}
\]
This means that
\begin{equation}\label{L^2 estimate last}
\begin{array}{lc} \displaystyle
\left\| \mathfrak{F}_s T  \mathfrak{F}_{-s} f \right\|_{L^2(\R^n)}  
  \le C   \left\|   f  \right\|_{L^2(\R^n)}, \quad \sigma \in  \S^{0}(\R^\vn).
\end{array}
\end{equation}
Combining the estimates (\ref{Hardy estimate}) and (\ref{L^2 estimate last}), and applying the complex interpolation theorem, we have
\[
\left\| \mathfrak{F}_s T  \mathfrak{F}_{-s} f \right\|_{L^p(\R^n)}  
  \le C   \left\|   f  \right\|_{L^p(\R^n)}, \quad \sigma \in \S^{m}(\R^\vn),
\]
if
\[
m \le -(n-d) \left( \frac{1}{p} -\frac{1}{2}\right), \quad 1<p\le 2.
\]
Hence, we have 
\[
\left\|  T  f \right\|_{L^p_s(\R^n)}  =
\left\| \mathfrak{F}_s T  f \right\|_{L^p(\R^n)}  
  \le C   \left\|  \mathfrak{F}_s   f  \right\|_{L^p(\R^n)} = C \left\|  f \right\|_{L^p_s(\R^n)}  , \quad \sigma \in \S^{m}(\R^\vn),
\]
if 
\[
m \le -(n-d) \left( \frac{1}{p} -\frac{1}{2}\right), \quad 1<p\le 2.
\]
By performing a similar discussion for the adjoint operator, we can obtain the case for $p > 2$. Thus, Corollary \ref{Sobolev} is proven.
\end{proof}

 \section{Adjoint operator $T^*$ }

Now, let us consider the adjoint operator $T^*$ of $T$. Similarly, define the partial operator $T^{\nu}_{j \ell}$ as
  \[ 
T_{j \ell}^{*\nu}f(y)  = \int_{\R^n}  f(y) K_{j \ell}^{* \nu}(x, y) dy, \quad
 K_{j \ell }^{*\nu}(x, y) = \int_{\R^n} e^{2 \pi \i \left( x  - \nabla_\xi\Phi(y, \xi_j^\nu) \right) \cdot \xi }  \Theta_{j \ell}^{*\nu}(y, \xi)d \xi.
 \]
 where 
 \[
   \Theta_{j \ell}^{*\nu}(y, \xi)= e^{ 2 \pi i  \left(  \nabla_\xi\Phi(y, \xi_j^\nu)\cdot \xi - \Phi(y, \xi) \right) }  \overline{\sigma} (y, \xi) \phi_{j \ell}(\xi) \chi^\nu_{j \ell  }(\xi )
 \]
Similar to $\Theta_{j \ell}^\nu$, when $\sigma \in \S^{-(n-d)/2}(\R^{\vn})$, we have
\[
\left | \p^\alpha_\xi  \Theta_{j \ell}^{*\nu}(y, \xi)   \right | \le C 2^{-j(n-d)/2}  \prod_{i=1}^d 2^{- j_i|\alpha_i| /2}.
\]
Thus, performing $|\alpha|$ partial integrations with respect to $\xi$, we obtain
 \begin{equation}
 \begin{array}{lc}\displaystyle
 \left |   K_{j \ell }^{*\nu}(x, y)  \right| \le C \prod_{i=1}^d  \left| x_i -\nabla_{\xi_i} \Phi(y, \xi_j^\nu) \right |^{-|\alpha_i |}
  \int_{\R^n} \left| \p_\xi^\alpha  \Theta_{j \ell}^{*\nu}(y, \xi) \right |d \xi
  \\\\ \displaystyle
  \le C  \prod_{i=1}^d |x_i|^{-|\alpha_i|}
 2^{ -j(n-d)/2}   2^{-j_i|\alpha_i|/2}\cdot 2^{j_i} \cdot 2^{j_i(n_i-1)/2}.
 \end{array}
 \end{equation}
Since there are at most $\C \prod_{i=1}^d 2^{j_i(n_i-1)/2}$ choices for $\nu$, we have
 \[
  \begin{array}{lc}\displaystyle
 \left |   K_{j \ell }^{*}(x, y)  \right|
  \le C \prod_{i=1}^d |x_i|^{-|\alpha_i|}
 2^{ -j(n-d)/2}   2^{-j_i|\alpha_i|/2}\cdot 2^{j_in_i}
 \\\\ \displaystyle
 \le C  2^{-j (|\alpha| -d)/2}  \prod_{i=1}^d |x_i|^{-|\alpha_i|}
  2^{\ell_i( |\alpha_i|/2 - n_i )}.
 \end{array}
 \]
Let $\U$ and $\W$ be subsets of ${1, 2, \dots, d}$ such that $\W \cup \U = {1, 2, \dots, d}$. Define
\[
Q_\W= \{ x \in \R^n: |x_i|>C 2^{\epsilon \ell_i }, \quad i \in \W;  \quad |x_i |< C 2^{\epsilon \ell_i }, \quad  i \in \U \}.
\]
When $\W = \emptyset$, we have $|Q_\W| \le C \prod_{i=1}^d 2^{\epsilon \ell_i n_i}$. Therefore, based on the $L^2(\R^n) \rightarrow L^p(\R^n)$ boundedness of $T_\ell^*$,
  \begin{equation*}
  \begin{array}{lc}\displaystyle
\int_{Q_\W }  \left | T_\ell^*a(x) \right |dx \le C |Q_\W|^{\frac{1}{2}} \left \| T_\ell^*a \right\|_{\L^2(\R^n)} \le C \prod_{i=1}^{d-1} 2^{\epsilon \ell_i n_i /2}  2^{-\ell_i n_i \frac{n-d}{2n}}\left \| a \right\|_{L^p(\R^n)}
\\\\ \displaystyle
\le C \prod_{i=1}^{d-1} 2^{ -\frac{ \ell_i n_i }{2} \cdot \left( 1-d/n- \epsilon \right)}  r^{-d/2}
\le C \prod_{i=1}^{d-1} 2^{ -\frac{ \ell_i n_i }{2} \cdot \left( 1-d/n- \epsilon \right)},  \qquad (r>1).
\end{array}
 \end{equation*} 
Choose $\gamma > 0$ sufficiently small, $|\alpha_i|$ sufficiently large for $i \in \W$, and $|\alpha_i| = 0$ for $i \in \U$. Then,
 \[
  \begin{array}{lc}\displaystyle
 \left |   K_{j \ell }^{*}(x, y)  \right| 
 \le C  2^{- j\gamma/2 } 2^{-j (|\alpha| -d-\gamma)/2 }  \prod_{i \in \W } |x_i|^{-|\alpha_i|}
  2^{\ell_i( |\alpha_i|/2 - n_i )}   \prod_{i \in \U }  2^{- \ell_i n_i } 
  \\\\ \displaystyle
   \le C  2^{- j\gamma/2 } 2^{-\ell_\W (|\alpha| -d-\gamma)/2 }  \prod_{i \in \W } |x_i|^{-|\alpha_i|}
  2^{\ell_\W( |\alpha_i|/2 - n_i )}   \prod_{i \in \U }  2^{- \ell_i n_i } 
 \end{array}
 \]
Here, $\ell_\W = \max_{i \in \W} \ell_i$. Therefore,
 \[
  \begin{array}{lc}\displaystyle
 \left |   K_{ \ell }^{*}(x, y)  \right| 
   \le C   2^{ -\ell_\W \left (  \sum_{i\in \W} (2n_i -d- \gamma )/2 \right )}  \prod_{i \in \W } |x_i|^{-|\alpha_i|}   \prod_{i \in \U }  2^{- \ell_i n_i } 
 \end{array}
 \]
 and 
 \[
   \begin{array}{lc}\displaystyle
   \int_{Q_\W}
 \left |   K_{ \ell }^{*}(x, y)  \right| dx
   \le C   2^{ -\ell_\W  \left (  \sum_{i\in \W} (2n_i -d- \gamma )/2 \right )}   \int_{Q_\W}\prod_{i \in \W } |x_i|^{-|\alpha_i|}  dx \cdot  \prod_{i \in \U }  2^{- \ell_i n_i } 
   \\\\ \displaystyle 
      \le C   2^{ -\ell_\W  \left (  \sum_{i\in \W} (2n_i -d- \gamma )/2 \right )}   \prod_{i \in \W } 2^{ \epsilon \ell_i(n_i- |\alpha_i|)}    \prod_{i \in \U }  2^{- \ell_in_i (1- \epsilon) } .
 \end{array}
 \]
If we take 
\[
  0<\epsilon <1 ,  \quad 1- \epsilon -d/n>0, \quad  |\alpha_i| ~ \text{Sufficiently large}, ~~i \in \W,
\]
 then there exists  $\delta>0$ such that 
 \[
 \int_{Q_\W }  \left | T_\ell^*a (x) \right |dx \le C \prod_{i=1}^{d-1} 2^{- \delta \ell_i }.
 \]
 Since 
 \[
 \R^n = \bigcup_{\W} Q_\W, 
 \]
Here, $\bigcup_{\W}$ denotes the union taken over all subsets of ${1, 2, \dots, d}$. Thus, we have proven that for $r > 1$,
   \[
 \int_{\R^n }  \left | T^*a (x) \right |dx \le C.
 \]
when $r<1$,

\begin{enumerate}
\item Using the inequality (\ref{L^2 to L^q}) instead of (\ref{L^p to L^2});

\item  The definition of $R_{j_i}^{*\nu_i}$ becomes
\[
R_{j_i}^{*\nu_i} = \left \{x_i \in \R^{n_i}: \left | \left <x_i-\nabla_{\xi} \Phi_i \left (\overline{y_i}, \xi_{j_i}^{\nu_i} \right ) , \xi_{j_i}^{\nu_i} \right > \right|\le C 2^{-j_i},  \left | x -\nabla_{\xi_i} \Phi_i \left (
\overline{y_i}, \xi_{j_i}^{\nu_i} \right )  \right |\le C 2^{-j_i/2}\right\}.
\] 
\end{enumerate}
 Then, by repeating the proof for $T$, we can prove the corresponding case for $T^*$.

 \end{proof}

\addcontentsline{toc}{section}{Acknowledgements}		
	
	\bibliography{references}
	\bibliographystyle{alpha.bst}
\end{document}